\theoremstyle{plain}
\newtheorem{theorem}{\textrm{\textbf{Theorem}}}
\newtheorem*{utheorem}{\textrm{\textbf{Theorem}}}
\newtheorem{corollary}[theorem]{\textrm{\textbf{Corollary}}}
\newtheorem{proposition}[theorem]{\textrm{\textbf{Proposition}}}
\newtheorem{lemma}[theorem]{\textrm{\textbf{Lemma}}}
\theoremstyle{definition}
\newtheorem{remark}[theorem]{\textrm{\textbf{Remark}}}
\theoremstyle{remark}
\numberwithin{equation}{section}
\def\sgn {\mathop{\rm sgn}}
\def\C{\mathbb{C}}
\def\R{\mathbb{R}}
\def\F{\mathbb{F}}
\def\Q{\mathbb{Q}}
\def\Z{\mathbb{Z}}
\def\N{\mathbb{N}}
\newcommand{\bp}{\ensuremath{\mathbb P}}
\newcommand{\I}{\ensuremath{\widetilde{I}}}
\begin{document}
\title[Loewner positive functions, and solutions of Cauchy's functional
equations]{Loewner positive entrywise functions, and classification of
measurable solutions of Cauchy's functional equations}
\author[Dominique Guillot \and Apoorva Khare \and Bala
Rajaratnam]{Dominique Guillot \and Apoorva Khare \and Bala
Rajaratnam\\Stanford University}
%\thanks{Partial support for this work came for D.G. from NSERC (Canada); for A.K. from the DARPA Grant YFA N66001-11-1-4131; and for B.R. from $\spadesuit$ grants NSF DMS-1106642 and DARPA-YFA N66001-11-1-4131.}
\address{Department of Mathematics, Stanford University}
\date{\today}
\subjclass[2010]{39B22 (primary); 39B52, 39B42 (secondary)}
\keywords{Loewner positivity, entrywise maps, multiplicative maps,
additive maps, locally measurable, Cauchy functional equations}

\begin{abstract}
Entrywise functions preserving Loewner positivity have been studied by
many authors, most notably Schoenberg and Rudin. Following their work, it
is known that functions preserving  positivity when applied entrywise to
positive semidefinite matrices of all dimensions are necessarily analytic
with nonnegative Taylor coefficients. When the dimension is fixed, it has
been shown by Vasudeva and Horn that such functions are automatically
continuous and sufficiently differentiable. A natural refinement of the
aforementioned problem consists of characterizing functions preserving
positivity under rank constraints. In this paper, we begin this study by
characterizing entrywise functions which preserve the cone of positive
semidefinite real matrices of rank $1$ with entries in a general
interval. Classifying such functions is intimately connected to the
classical problem of solving Cauchy's functional equations, which have
non-measurable solutions. We demonstrate that under mild local
measurability assumptions, such functions are automatically smooth and
can be completely characterized. We then extend our results by
classifying functions preserving positivity on rank $1$ Hermitian complex
matrices.
\end{abstract}
\maketitle

\section{Introduction}

Given a function $f: \R \to \R$ and a matrix $A := (a_{ij})_{i,j=1}^n$,
denote by $f[A] := (f(a_{ij}))_{i,j=1}^n$ the matrix obtained by applying
$f$ to $A$ entrywise. Such entrywise functions of matrices have been
well-studied in the literature (see e.g.~\cite{Schoenberg42, Rudin59,
Herz63, Horn, Christensen_et_al78, vasudeva79, FitzHorn, fitzgerald,
Hiai2009, GKR-crit-2sided}), and have recently received renewed attention
due to their application in the regularization of high-dimensional
covariance matrices (see e.g.~\cite{bickel_levina,
Guillot_Rajaratnam2012b, Guillot_Rajaratnam2012} and the references
therein). A natural and important question consists of classifying the
functions $f$ for which $f[A]$ is positive semidefinite for every $n
\times n$ positive semidefinite matrix $A$. This question was previously
studied by Schoenberg \cite{Schoenberg42} and Rudin \cite{Rudin59}, who
showed that functions preserving Loewner positivity for all dimensions $n
\geq 2$ are automatically analytic with nonnegative Taylor coefficients.
When the dimension $n$ is fixed, it can be shown \cite{vasudeva79, Horn}
that the function $f$ has to satisfy certain continuity and
differentiability assumptions on $(0,\infty)$.

A natural refinement of this problem consists of classifying functions
preserving positivity 1) under rank constraints and 2) on arbitrary
domains in $\R$. Given $I \subset \R$, denote by $\bp_n^k(I)$ the cone of
positive semidefinite $n \times n$ matrices having rank at most $k$. As
we will show later, given $1 \leq k \leq n$, there exist non-measurable
entrywise functions mapping $\bp_n^1(I)$ into $\bp_n^k(\R)$. This is in
sharp contrast to the case where no rank constraint is imposed, where $f$
is automatically continuous on $(0,\infty)$ (see \cite[Theorem
2]{vasudeva79}). 

In this paper, we characterize entrywise functions mapping $\bp_n^1(\I)$
into $\bp_n^1(\R)$ under minimal additional hypotheses, and for arbitrary
intervals $\I \subset \R$. We demonstrate how a weak assumption of
measurability implies that every $f: \I \to \R$ mapping $\bp_n^1(\R)$
into $\bp_n^1(\R)$ is smooth on the whole interval $\I$ except maybe at
the origin. To state our main result, we introduce some notation. Given
$\alpha \in \R$, define
\begin{equation}
\phi_\alpha(x) := |x|^\alpha, \quad \psi_\alpha(x) := \sgn(x) |x|^\alpha,
\quad x \in \R \setminus \{ 0 \}, 
\end{equation}
and let $\phi_\alpha(0) = \psi_\alpha(0) := 0$. The following theorem is
the main result of the paper.

\begin{utheorem}[Main result]
Let $I \subset \I \subset \R$ be intervals containing $1$ as an interior
point. Suppose $\I \cap (0,\infty)$ is open, $\pm \sup \I \not\in \I$,
and let $n \geq 3$. Then the following are equivalent for a function $K:
\I \to \R$ which is Lebesgue measurable on $I$.
\begin{enumerate}
\item $K \not\equiv 0$ on $\I$ and $K[A] := (K(a_{ij}))_{i,j=1}^n \in
\bp_n^1(\R)$ for all $A \in \bp_n^1(\I)$.
\item $K(1) \neq 0$ and $K[A] \in \bp_n^1(\R)$ for all $A \in
\bp_n^1(\I)$.
\item Either $K$ is a positive constant, or $K$ is a positive scalar
multiple of $\phi_\alpha$ or $\psi_\alpha$ on $\I$ for some $\alpha \in
\R$.
\end{enumerate}

\noindent Moreover, the distinct maps among $\{ \phi_\alpha, \psi_\alpha
: \alpha \in \R \}$ and the constant map $K \equiv 1$ are linearly
independent on $\I$.
\end{utheorem}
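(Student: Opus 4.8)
The plan is to first convert the matrix hypothesis into scalar functional equations, and then propagate a local description of $K$ to all of $\I$. Every $A\in\bp_n^1(\I)$ has the form $A=uu^T$ with all $u_iu_j\in\I$, and $K[A]\in\bp_n^1(\R)$ holds exactly when every $2\times 2$ minor of $\bigl(K(u_iu_j)\bigr)_{i,j}$ vanishes (rank at most $1$) and the diagonal is nonnegative (positivity). Extracting principal $2\times 2$ minors gives $K(u_i^2)K(u_j^2)=K(u_iu_j)^2$ with $K(u_i^2)\ge 0$, and — this is where $n\ge 3$ enters — the minor on rows $\{i,j\}$ and columns $\{i,k\}$ gives $K(u_i^2)K(u_ju_k)=K(u_iu_j)K(u_iu_k)$. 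Taking $u=(x,y,1,\dots,1)$ (padding with $0$'s when $0\in\I$) yields the multiplicative Cauchy equation $K(xy)\,K(1)=K(x)\,K(y)$ for $(x,y)$ near $(1,1)$, together with $K(x^2)K(1)=K(x)^2$ there. The implications (2)$\Rightarrow$(1) and (3)$\Rightarrow$(1),(2) are immediate: $\phi_\alpha[uu^T]=vv^T$ with $v_i=\phi_\alpha(u_i)$, $\psi_\alpha[uu^T]=ww^T$ with $w_i=\psi_\alpha(u_i)$, a positive constant $c$ gives $cJ$ of rank $\le 1$ and positive semidefinite, and all of these satisfy $K(1)\ne 0$.

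For (1)$\Rightarrow$(2) I must rule out $K(1)=0$. From $u=(1,\dots,1)$ we get $K(1)\ge 0$; assume $K(1)=0$. The identities above then force $K(s)^2=0$ whenever $s,s^2\in\I$, so $K$ vanishes on an interval around $1$. The crucial step is a multiplicative bootstrap: if $K\equiv 0$ on an interval $(p,q)$ around $1$ and $u_1^2\in(p,q)$, then for $u=(u_1,u_2,u_2,\dots,u_2)$ — which lies in $\bp_n^1(\I)$ as soon as $u_1^2,u_1u_2,u_2^2\in\I$ — the $(1,2)$ principal minor forces $K(u_1u_2)=0$; letting $u_2$ vary enlarges the zero set to a strictly larger interval, and iterating an exponent recursion that converges to $\sup\I$ from below and to $\inf\I$ from above shows $K\equiv 0$ on $\I\cap(0,\infty)$. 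This is exactly where the hypotheses ``$\I\cap(0,\infty)$ open'' and ``$\pm\sup\I\not\in\I$'' are needed, to keep every matrix so constructed inside $\bp_n^1(\I)$. Then a coordinate pair $(\sqrt t,-\sqrt t)$ forces $K(-t)=0$ for $-t\in\I$, and a zero coordinate gives $K(0)=0$; hence $K\equiv 0$, contradicting (1). I expect this propagation — and its analogue below — to be the main obstacle.

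For (2)$\Rightarrow$(3): $K(1)>0$ (nonzero and, as above, nonnegative), so $f:=K/K(1)$ is multiplicative near $1$ with $f(1)=1$. A zero of $f$ near $1$ would force $f(t_0s)=0$ for $s$ near $1$ and all $f(t_0^{1/2^k})=0$, hence $f\equiv 0$ near $1$, contradicting $f(1)=1$; so $f>0$ near $1$ and $g:=\log\circ f\circ\exp$ solves the additive Cauchy equation on a neighbourhood of $0$ and is measurable (as $K$ is measurable on $I\ni 1$). Invoking the classification of (locally) measurable solutions of Cauchy's equation gives $g(s)=\alpha s$, i.e.\ $f(x)=x^\alpha$ for $x$ near $1$. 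The relation $f(x^2)=f(x)^2$ then propagates $f(x)=x^\alpha$ to all of $\I\cap(0,\infty)$ by squaring outward from the neighbourhood of $1$ and taking roots inward, the boundary hypotheses again keeping the intermediate points in $\I$. For $x\le 0$: if $0\in\I$, a matrix with a zero coordinate forces $K(0)\bigl(K(1)x^\alpha-K(0)\bigr)=0$ on an interval and $K(0)\ge 0$, so either $K(0)=0$ or ($\alpha=0$ and $K\equiv K(1)$); if $\I$ meets $(-\infty,0)$, then for $-t\in\I$ a matrix with exactly one negative coordinate yields $K(-t)K(1)=K(t)K(-1)$ and $K(-t)^2=K(t^2)K(1)$, forcing $K(-1)=\pm K(1)$, whence $K=K(1)\phi_\alpha$ (sign $+$) or $K=K(1)\psi_\alpha$ (sign $-$) on $\I\setminus\{0\}$, and a matrix with both a negative and a zero entry pins the value at $0$ consistently. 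Sorting these cases yields (3).

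Finally, for the ``moreover'': restricting any relation $\sum_i c_i\phi_{\alpha_i}+\sum_j d_j\psi_{\beta_j}+e\cdot 1\equiv 0$ to $\I\cap(0,\infty)$ turns it into a vanishing combination of distinct real powers $x^\gamma$ on an interval, which are linearly independent (e.g.\ apply the operator $\prod(x\tfrac{d}{dx}-\gamma')$ to isolate one power), so the total coefficient at each exponent is $0$. When $\I$ meets $(-\infty,0)$, reading the relation there (where $\psi_\alpha=-|x|^\alpha$) separates each $\phi_\alpha$ from the corresponding $\psi_\alpha$, and evaluating at $0$ (when $0\in\I$) kills $e$; when $\I\subseteq[0,\infty)$ the ``distinct maps'' clause has already merged each $\psi_\alpha$ with $\phi_\alpha$ (and, if $0\not\in\I$, the constant with $\phi_0$), leaving only the powers, already treated. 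Hence every coefficient vanishes, completing the proof.
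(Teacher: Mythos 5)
Your overall strategy mirrors the paper's: extract the multiplicative Cauchy equation from $2\times 2$ minors of $uu^T$ with mostly-$1$ coordinates (this is exactly where $n\geq 3$ enters), reduce to the additive equation via $\log\circ K\circ\exp$ and invoke the locally-measurable Sierpi\'nski--Banach classification, and propagate both the zero set (for (1)$\Rightarrow$(2)) and the power-function form (for (2)$\Rightarrow$(3)) out to $\sup\I$ by the geometric recursion $q\mapsto\sqrt{q\sup\I}$ --- which is precisely the paper's exponent sequence $(\sup\I)^{1-2^{-m}}$. Your linear-independence argument (the operator $\prod(x\frac{d}{dx}-\gamma')$ applied on $\I_+$, then sign-splitting on $\I_-$) is a valid alternative to the paper's Dedekind character-independence lemma on the semigroup $(\max(\inf\I,-1),1)$.

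There is, however, a genuine gap in your treatment of $\I_-$ in (2)$\Rightarrow$(3). Having obtained $K(x)=\varepsilon(x)K(|x|)$ with $\varepsilon:\I_-\to\{\pm 1\}$ from the principal minor $K(t)^2=K(-t)^2$, you must show $\varepsilon$ is \emph{constant}; you propose to do this via ``$K(-t)K(1)=K(t)K(-1)$,'' which pins everything to the single value $K(-1)$. But $-1$ need not lie in $\I$ (take $\I=(-\tfrac12,2)$), and even when it does, deriving that identity requires a rank-one matrix in $\bp_n^1(\I)$ whose entries include $-t,\,-1,\,t,\,1$ and the squares of the coordinates involved --- which the domain constraints may forbid. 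The constancy of $\varepsilon$ is a genuinely delicate point: the paper's Proposition on $n=2$ shows that for $2\times 2$ matrices $\varepsilon$ can be an \emph{arbitrary} measurable sign function, so constancy cannot be waved through; it must come from a $3\times 3$ configuration compatible with $\I$. The paper handles this in two stages, first proving $\varepsilon(x)=\varepsilon(y)$ for $-\sqrt{|\inf\I|}<x<y<0$ using $u=(1,\dots,1,y/x,x)^T$ (note $y/x\in(0,1)\subset\I$ always, so no appeal to $-1\in\I$ is needed), and then reaching $y\in(\inf\I,-\sqrt{|\inf\I|}]$ with the vector $u=\sqrt{x}\,(a/x,y/x,1,\dots,1)^T$. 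Your proposal is missing this construction, and the step as written would fail for intervals with $\inf\I>-1$. A secondary, smaller omission of the same kind: in the case $K(0)\neq 0$ you conclude $K\equiv K(0)$ on $\I_+\cup\{0\}$ but do not exhibit the minor forcing $K(x)=K(0)$ for $x\in\I_-$ (the paper uses $u=\sqrt{|a|}(1,-1,0,\dots,0)^T$ and an off-diagonal $2\times 2$ minor).
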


Note that when $I = \I = (0,\infty)$, the rank constraint in the Main
Theorem is equivalent to the vanishing of all $2 \times 2$ minors of
$f[A]$. Thus, in that case, the functions mapping $\bp_n^1(\I)$ into
$\bp_n^1(\R)$ are precisely the measurable solutions of Cauchy's power
functional equation $K(xy) = K(x) K(y)$ on $(0,\infty)$, which have been
classified by  Sierpi\'nsky \cite{Sierpinsky}, Banach \cite{Banach},
Alexiewicz and Orlicz \cite{Orlicz}, and others. The analysis is much
more involved for a general interval $\I$. In proving the Main Theorem of
the paper, our strategy is as follows: in Section \ref{Scauchy}, we
classify the solutions of Cauchy's additive and multiplicative functional
equations 1) on a general interval $\I \subset \R$, and 2) assuming
Lebesgue measurability only on an arbitrary subinterval $I \subset \I$.
In doing so, we generalize previous work on the additive Cauchy
functional equation by Sierpi\'nsky \cite{Sierpinsky} and Banach
\cite{Banach} to this more general setting. We then prove the Main
Theorem in Section \ref{Sproof} by showing that every entrywise function
mapping $\bp_n^1(\I)$ into $\bp_n^1(\R)$ satisfies Cauchy's
multiplicative functional equation on $\I$. Although this result is
expected, its proof is intricate and requires carefully examining the
minors of various matrices. Next, in Section \ref{Scomplex}, we
demonstrate how our main result can be extended to the case where
matrices are also allowed to have complex entries. Our characterization
involves an interesting family of complex power functions
$\Psi_{\alpha,\beta} : r e^{i \theta} \mapsto r^\alpha e^{i \beta
\theta}$ for $\alpha \in \R$ and $\beta \in \Z$. We conclude the paper by
showing how our techniques can also be applied to classify the solutions
of other functional equations, under domain and measurability
constraints.\medskip

\noindent\textbf{Notation:}
Given a subset $S \subset \C$ and integers $1 \leq k \leq n \in \N$,
denote by $\bp_n^k(S)$ the set of $n \times n$ Hermitian positive
semidefinite matrices with entries in $S$ and rank at most $k$. Also
define $\bp_n(S) := \bp_n^n(S)$, and $S_+ := S \cap (0,\infty), S_- := S
\cap (-\infty,0)$. We denote the complex disc centered at $a \in \C$ and
of radius $0 < R \leq \infty$ by $D(a,R)$. We write $A \geq 0$ to denote
that $A \in \bp_n(\C)$, and write $A \geq B$ when $A - B \in \bp_n(\C)$.
We denote by $I_n$ the $n \times n$ identity matrix, and by ${\bf 0}_{n
\times n}$ and ${\bf 1}_{n \times n}$ the $n \times n$ matrices with
every entry equal to $0$ and $1$ respectively. Finally, we denote the
conjugate transpose of a vector or matrix $A$ by $A^*$.

\section{Cauchy functional equations}\label{Scauchy}

The four {\it Cauchy functional equations}
\begin{alignat}{3}
& (a)\ K(x+y) = K(x) + K(y), & \qquad & (b)\ K(xy) = K(x) K(y),
\label{Ecauchy}\\
& (c)\ K(x+y) = K(x) K(y),   & \qquad & (d)\ K(xy) = K(x) + K(y)\notag
\end{alignat}
(for all $x,y \in \R$) have been well-studied in the literature
\cite{Aczel-Dhombres,Czerwik,Kannappan}. It is not difficult to show that
the following families of nonconstant continuous functions are solutions
to these equations:
(a) linear maps $K(x) = \alpha x$;
(b) power functions $K(x) = x^\alpha$;
(c) exponential maps $K(x) = e^{\alpha x}$;
(d) logarithm maps $K(x) = \ln (x^\alpha)$ respectively.
More generally, as was shown independently by Sierpi\'nsky
\cite{Sierpinsky} and Banach \cite{Banach} (see also Alexiewicz and
Orlicz \cite{Orlicz}), the linear maps are also the only solutions of (a)
when $K$ is only Lebesgue measurable. The Cauchy multiplicative
functional equation (b) has also been studied in other related settings
such as the Loewner and Lorentz cones \cite{Wesolowski-psd, Wesolowski}. 

In order to prove our Main Theorem we begin by studying the nonconstant
solutions to Equation \eqref{Ecauchy}(a) on (1) general intervals $\I
\subset \R$ and (2) assuming Lebesgue measurability only on a subinterval
$I \subset \I$. Our main result extends previous work by Sierpi\'nsky
\cite{Sierpinsky} and Banach \cite{Banach}, where they consider the case
$I = \I = \R$.

\begin{theorem}[Additive Cauchy functional equation]\label{Tsierpinsky}
Let $I \subset \I \subset \R$ be intervals containing $0$ as an interior
point. Then the following are equivalent for $g : \I \to \R$.
\begin{enumerate}
\item $g$ is Lebesgue measurable on $I$ and additive on $\I$.
\item $g$ is continuous on $I$ and additive on $\I$.
\item $g$ is linear on $\I$ - i.e., $g(x) = \beta x = \beta \psi_1(x)$
for some $\beta \in \R$ and all $x \in \I$.
\end{enumerate}
\end{theorem}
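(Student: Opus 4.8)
The implications $(3) \Rightarrow (2) \Rightarrow (1)$ are immediate: a linear map is continuous, a continuous map is Lebesgue measurable, and linearity on $\I$ trivially implies additivity on $\I$. So the real content is $(1) \Rightarrow (3)$, and my plan is to split it into two moves. First I would promote the \emph{local} additivity of $g$ on $\I$ to a bona fide additive function $G$ on all of $\R$ with $G|_{\I} = g$. Then I would invoke the classical Sierpi\'nsky--Banach phenomenon --- an additive function on $\R$ that is Lebesgue measurable on a set of positive measure must be linear --- applied to $G$, and restrict back to $\I$.

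For the extension, fix $\epsilon > 0$ with $(-\epsilon,\epsilon) \subset \I$. A routine induction from additivity yields the restricted homogeneity identity
\[
g(kt) = k\, g(t) \qquad \text{whenever } t, 2t, \dots, kt \in \I
\]
(and in particular $g(0) = 0$). Given $x \in \R$, choose $n \in \N$ large enough that $x/n \in (-\epsilon,\epsilon)$, and set $G(x) := n\, g(x/n)$. I would check that this is independent of $n$ by comparing the definitions coming from $n$ and $m$ through the common refinement $x/(nm)$: all the partial sums $j x/(nm)$ involved stay inside $(-\epsilon,\epsilon) \subset \I$, so two applications of restricted homogeneity show both expressions equal $nm\, g(x/(nm))$. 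Taking $n$ simultaneously large for $x$, $y$, and $x+y$ shows $G$ is additive on $\R$; and if $x \in \I$, then each $kx/n$ with $1 \le k \le n$ lies on the segment between $0$ and $x$, hence in $\I$, so restricted homogeneity gives $G(x) = n\, g(x/n) = g(x)$. Thus $G$ genuinely extends $g$.

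It remains to see that $G$ is linear. Since $I$ contains $0$ as an interior point it has positive Lebesgue measure, and $G$ agrees with the measurable function $g$ there. By Lusin's theorem $G$ is bounded on a subset $F \subset I$ of positive measure, and by the Steinhaus theorem $F - F$ contains an interval $(-\eta,\eta)$; writing $x = a - b$ with $a, b \in F$ shows $|G(x)| \le 2\sup_F |G|$ for $|x| < \eta$, so $G$ is bounded near $0$. An additive function bounded near $0$ is continuous at $0$ (rescale: $|G(h)| \le M/k$ once $|h| < \eta/k$), hence continuous everywhere, and a continuous additive function on $\R$ equals $G(x) = \beta x$ with $\beta = G(1)$. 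Restricting to $\I$ gives $g(x) = \beta x = \beta \psi_1(x)$ as claimed. Alternatively, once $G$ has been constructed one may cite Sierpi\'nsky \cite{Sierpinsky} and Banach \cite{Banach} verbatim.

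The hard part is not conceptual but bookkeeping: making the extension $G$ well defined and additive, because the hypothesis only permits combining points whose intermediate sums remain in $\I$, so every invocation of additivity must be checked to stay inside the interval. After that, the measurable-to-linear step is classical; the only wrinkle --- that measurability is assumed merely on the (possibly tiny) subinterval $I$ rather than on all of $\R$ --- is precisely what the Lusin--Steinhaus reduction absorbs.
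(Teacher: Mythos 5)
Your proof is correct, but it takes a genuinely different route from the paper's. The paper adapts Sierpi\'nsky's original argument \emph{in situ} on the interval: it fixes a rational $a_0$, sets $\beta := g(a_0)/a_0$, proves $g(a)-g(b) = \beta(a-b)$ whenever $a-b$ is rational via a telescoping sum, and then kills the exceptional sets $E_\pm = \{x : \pm(g(x)-\beta x) > 0\}$ using the classical lemma that two sets of positive measure contain points with rational difference, followed by a translation argument to show $E_\pm$ are actually empty; only at the very last line does it propagate from $(-R,R)$ to $\I$ by additivity. You instead front-load all the domain bookkeeping into a single clean extension step, building an additive $G$ on all of $\R$ with $G|_{\I} = g$ via $G(x) := n\,g(x/n)$, and then run the standard Lusin--Steinhaus--boundedness argument (essentially Banach's proof) on $G$. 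Your extension is verified carefully where it matters: well-definedness through the common refinement $x/(nm)$, additivity by taking $n$ large for $x$, $y$, $x+y$ simultaneously, and $G|_{\I}=g$ via convexity of $\I$. The payoff of your route is modularity --- the measure theory is completely decoupled from the interval constraints, and the second half is a citable classical fact --- whereas the paper's route stays closer to Sierpi\'nsky's original and avoids constructing any object outside $\I$. One cosmetic remark: Lusin's theorem gives continuity on a closed subset, so to get boundedness you should first restrict to a compact subinterval of $I$ (or, more simply, note that $I = \bigcup_N \{x \in I : |g(x)| \le N\}$ forces some level set to have positive measure, which feeds directly into Steinhaus without Lusin). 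This does not affect correctness.
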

\begin{proof}
Clearly, $(3) \implies (2) \implies (1)$. In order to show that $(1)
\implies (3)$, we first adapt the arguments in \cite{Sierpinsky} to the
interval $I$ as follows. Set $R := \min(|\inf I|, \sup I)$, and fix a
rational number $a_0 \in (0,R)$. Suppose $g(a_0) = \beta a_0$ for some
$\beta$.
%Note that $\beta \neq 0$ since $f$ is nonconstant.
One now shows easily that $g(a_0/m) = \beta (a_0/m)$ for all $m \in \N$.
We then claim that $g(a)-g(b) = \beta (a-b)$ for all $a,b \in I$ with
$a-b$ rational. Indeed, suppose $a_0 = p/q$ and $a-b = r/s > 0$ for
integers $p,q,r,s \in \N$. Now for any integer $N > \frac{R}{qs}$,
\[ g(a) - g(b) = \sum_{i=1}^{Nqr} g(a - (i-1)/(Nqs)) - g(a - i/(Nqs)) =
Nqr \cdot g \left( \frac{1}{Nqs} \right) = \beta \frac{Nqr}{Nqs} =
\beta(a-b). \]

\noindent Moreover, $g$ is clearly odd, so the sets
\[ E_\pm := \{ x \in (-R,R) : \pm (g(x) - \beta x) > 0 \} \]

\noindent satisfy: $E_- = - E_+$. Now if $E_\pm$ have positive Lebesgue
measure, then by a classical result \cite[Lemma 2]{Sierpinsky}, there
exist $e_\pm \in E_\pm$ such that $e_+ - e_- \in \Q$. This is a
contradiction since $g(e_\pm) - \beta e_\pm$ are of different signs.

We next claim that $g(x) = \beta x$ for all $x \in (-R,R)$. Suppose this
is false, and $g(a) \neq \beta a$ for some $a \in (-R,R)$. Now since
$E_\pm$ have Lebesgue measure zero, the set $G := \{ x \in (2|a|-R,R) :
g(x) = \beta x \}$ has positive measure, whence so does $G' := \{ x \in
(|a| - R, R - |a|) : g(x+a) = \beta(x+a) \}$. But since $g(a) \neq \beta
a$, hence $G' \subset E_- \coprod E_+$ and $E_\pm$ both have Lebesgue
measure zero, which is a contradiction. We conclude that $g(x) = \beta x$
for all $x \in (-R,R)$. By additivity, $g(x) = \beta x$ for all $x \in
\I$, proving (3).
\end{proof}

Using Theorem \ref{Tsierpinsky}, we can now classify the solution of
Cauchy's multiplicative functional equations to general intervals, under
our local measurability assumption. The following result plays a crucial
role in the proof of the Main Theorem.

\begin{theorem}\label{Tmult}
Let $I \subset \I \subset \R$ be intervals containing $1$ as an interior
point. Then the following are equivalent for a function $K : \I \to \R$
that is not identically zero on $\I$.
\begin{enumerate}
\item $K(1) \neq 0$, $K$ is Lebesgue measurable on $I$, and $K / K(1)$ is
multiplicative on $\I$.

\item $K(1) \neq 0$, $K$ is monotone on $I_\pm$, and $K / K(1)$ is
multiplicative on $\I$.

\item $K(1) \neq 0$, $K$ is continuous on $I \setminus \{ 0 \}$, and $K /
K(1)$ is multiplicative on $\I$.

\item $K(1) \neq 0$, $K$ is differentiable on $I \setminus \{ 0 \}$, and
$K / K(1)$ is multiplicative on $\I$.

\item Either $K$ is a nonzero constant on $\I$, or $K$ is a scalar
multiple of $\phi_\alpha$ or $\psi_\alpha$ for some $\alpha \in \R$.
\end{enumerate}
\end{theorem}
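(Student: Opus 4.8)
The plan is to establish the cycle of implications $(5) \implies (4) \implies (3) \implies (2) \implies (1) \implies (5)$, with the bulk of the work in the final implication. The implications $(5) \implies (4) \implies (3) \implies (2)$ are immediate: the functions $\phi_\alpha, \psi_\alpha$ and the nonzero constants are all differentiable away from the origin, differentiable functions are continuous, and continuous functions on the intervals $I_+$ and $I_-$ are monotone only if\ldots --- in fact here one should be slightly careful, so instead I would route $(4)\implies(3)\implies(1)$ and separately $(5)\implies(2)\implies(1)$, both of which are routine since $\phi_\alpha,\psi_\alpha$ are monotone on each of $I_+$ and $I_-$, and since continuity on $I\setminus\{0\}$ implies Lebesgue measurability on $I$. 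Either way, all arrows into (1) are easy, so the real content is $(1) \implies (5)$.

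For $(1) \implies (5)$, replace $K$ by $K/K(1)$ so that we may assume $K(1) = 1$ and $K$ is multiplicative on $\I$. The first step is to transfer the problem to the additive setting via the logarithm. Since $1$ is an interior point of $I$, there is an interval $(e^{-\delta}, e^{\delta}) \subset I$ on which $K$ is measurable and positive-valued: positivity follows because $K(x^2) = K(x)^2 \geq 0$ for $x$ near $1$, and $K$ cannot vanish on any neighborhood of $1$ (if $K(x_0) = 0$ with $x_0$ near $1$, then $K(1) = K(x_0)K(1/x_0) = 0$ once $1/x_0 \in \I$, contradicting $K(1)=1$ — one needs $x_0$ close enough to $1$ that $1/x_0 \in \I$, which holds since $1$ is interior). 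Hence on $(e^{-\delta}, e^\delta)$ we may define $g(t) := \log K(e^t)$ for $t \in (-\delta,\delta)$; then $g$ is Lebesgue measurable on $(-\delta,\delta)$ (composition with the homeomorphism $\exp$ and with $\log$, legitimate since $K > 0$ there) and additive wherever the exponentials stay in $\I$. By Theorem \ref{Tsierpinsky}, $g(t) = \alpha t$ on a neighborhood of $0$, i.e.\ $K(x) = x^\alpha$ for all $x$ in a neighborhood $(e^{-\delta'}, e^{\delta'})$ of $1$.

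The second step is to propagate this local formula to all of $\I$ using multiplicativity, and this is where the main obstacle lies: the interval $\I$ need not be closed under multiplication, so one cannot simply write every $x \in \I$ as a product of elements near $1$. The key is that $\I$ is an interval containing $1$ as an interior point, so for any $x \in \I_+$ the ``geodesic'' $\{x^s : s \in [0,1]\}$ (or rather a chain $1 = x_0, x_1, \dots, x_m = x$ with consecutive ratios $x_{i+1}/x_i$ close to $1$) can be chosen to lie entirely in $\I_+$ — indeed if $x > 1$ the chain $1 < x_1 < \dots < x$ is increasing and stays in $\I$ since $\I$ is an interval, and similarly if $x < 1$. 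Along such a chain, multiplicativity gives $K(x) = \prod K(x_{i+1}/x_i) = \prod (x_{i+1}/x_i)^\alpha = x^\alpha$, provided each ratio lies in the neighborhood where we already know $K(t) = t^\alpha$; a compactness argument on $[1,x]$ (or $[x,1]$) guarantees a finite such chain exists. This shows $K(x) = x^\alpha = \phi_\alpha(x) = \psi_\alpha(x)$ for all $x \in \I_+$. Finally, for $x \in \I_-$ (if nonempty), pick $x \in \I_-$ with $-x \in \I_+$ (possible near $0$, which is near the boundary or interior of $\I$; if $\I_-$ is nonempty and $1$ is interior then a neighborhood of $0$ meets both sides only when $0 \in \I$, so more carefully: use $K(x)^2 = K(x^2)$ with $x^2 \in \I_+$ to get $K(x) = \pm x^2{}^{\alpha/2} = \pm|x|^\alpha$), and then $K(x) K(y) = K(xy)$ with $x,y \in \I_-$, $xy \in \I_+$ forces the sign $\varepsilon := \operatorname{sgn} K(x)/\phi_\alpha(x)$ to satisfy $\varepsilon^2 = 1$ and to be independent of $x$; so $K = \phi_\alpha$ on $\I_-$ (if $\varepsilon = 1$) or $K = \psi_\alpha$ on $\I_-$ (if $\varepsilon = -1$). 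One degenerate branch remains: if the local additive solution is $g \equiv 0$, i.e.\ $\alpha = 0$, then $K \equiv 1$ on $\I_+$ and the same sign analysis gives $K \equiv 1$ or $K = \operatorname{sgn}$ on $\I_-$, i.e.\ $K$ is the constant $1$ or $K = \psi_0$; in all cases $K$ is of the form in (5), completing the proof.
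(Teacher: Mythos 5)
Your overall strategy coincides with the paper's: reduce to the additive Cauchy equation (Theorem \ref{Tsierpinsky}) via the logarithm on $\I_+$, then extend to $\I_-$ by a sign analysis. (The paper applies Theorem \ref{Tsierpinsky} directly on all of $\ln \I_+$ rather than locally and then chaining; your chain argument on $\I_+$ is a valid substitute.) However, there is a genuine gap in your treatment of $\I_-$. You write ``use $K(x)^2 = K(x^2)$ with $x^2 \in \I_+$'' and ``$K(x)K(y) = K(xy)$ with $x,y \in \I_-$, $xy \in \I_+$,'' but multiplicativity of $K/K(1)$ on $\I$ only yields these identities when $x$, $y$ \emph{and} the product all lie in $\I$. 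Take $\I = (-10,2)$: for $x=-5 \in \I_-$ one has $x^2 = 25 \notin \I$, so the identity $K(x)^2 = K(x^2)$ is unavailable, and likewise $xy \notin \I$ for most pairs $x,y \in \I_-$. As written, your argument pins down $K$ only on $\I \cap (-\sqrt{\sup\I},\,0)$ and says nothing about the rest of $\I_-$. The paper closes exactly this gap in two moves: it first establishes $K(x)/K(1) = \varepsilon(x)|x|^\alpha$ with $\varepsilon$ constant on $\I'_- := \I \cap (-\sqrt{\sup\I},\,0)$, using the quotient $y/x \in (0,1) \subset \I$ (which always stays in $\I$) rather than the product $xy$; it then reaches an arbitrary $x \in \I_-$ by writing $x = y\,a_0^m$ with $y \in \I'_-$ and $a_0 \in \I \cap (1,\infty)$, multiplying in one factor of $a_0$ at a time so that every intermediate product lies in $[x,0) \subset \I$. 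Some version of this extension step is indispensable and is missing from your proof.

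Two smaller points. First, you never determine $K(0)$ when $0 \in \I$: since $\phi_\alpha(0) = \psi_\alpha(0) = 0$ by definition, conclusion (5) forces $K(0)=0$ for nonconstant $K$; this follows from $K(0)/K(1) = (K(0)/K(1))\cdot(K(x_0)/K(1))$ for any $x_0$ with $K(x_0) \neq K(1)$, but it must be said. Second, the measurability of $K \circ \exp$ does not follow merely from ``composition with the homeomorphism $\exp$'': a Lebesgue measurable function precomposed with a continuous map need not be Lebesgue measurable. It is true here because $\ln$ carries null sets to null sets, so one can write $K^{-1}(S)$ as a Borel set symmetric-differenced with a null set and push both pieces through $\ln$; the paper spells this out, and your write-up should too.
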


\begin{comment}
\begin{corollary}
Let $I \subset \I \subset \R$ be intervals containing $0$ as an interior
point. Then the following are equivalent for $g : \I \to \R$.
\begin{enumerate}
\item $g$ is Lebesgue measurable on $I$ and satisfies
\begin{equation}\label{Eaddprime}
g(x+y) + g(x-y) = 2 g(x), \qquad \forall x \pm y \in \I.
\end{equation}
\item $g$ is continuous on $I$ and satisfies \eqref{Eaddprime} on $\I$.
\item $g$ is (affine) linear on $\I$ - i.e., $g(x) = \beta x + g(0) =
\beta \psi_1(x) + g(0)$ for some $\beta, g(0) \in \R$ and all $x \in \I$.
\end{enumerate}
\end{corollary}

\begin{proof}
Clearly, $(3) \implies (2) \implies (1)$. Now suppose (1) holds; we will
work with $h(x) := g(x) - g(0)$, which clearly satisfies
\eqref{Eaddprime} as well. Now given $x = y$ with $|x| < R := \frac{1}{2}
\min(|\inf \I|, \sup \I)$, applying \eqref{Eaddprime} for $h$ yields that
$h(2x) = 2 h(x)$. Now given $-R < b \leq a < R$, set $x := (a+b)/2$ and
$y := (a-b)/2$; it follows immediately that $h$ is additive on $(-R,R)$.
We are now done by Theorem \ref{Tsierpinsky} since $h$ is also Lebesgue
measurable on $I$.
\end{proof}
\end{comment}

\begin{proof}
That $(5) \implies (4) \implies (3) \implies (1)$ and $(5) \implies (2)
\implies (1)$ are standard. It therefore suffices to show that $(1)
\implies (5)$. Assume that (1) holds and $K$ is nonconstant on $\I$. We
claim that $K$ does not change sign on $\I_+$. Indeed, since $1 \in \I$,
the interval $\I_+$ is closed under taking square roots.
Now $K(x)/K(1) = (K(\sqrt{x})/K(1))^2 \geq 0$ for all $x \in \I_+$. We
next claim that $K$ does not vanish on $\I_+$ if $K(1) \neq 0$. Indeed,
if $K(x) = 0$ for any $x \in \I_+$, then $K(x^{1/2^m}) = 0$ for all $m
\in \N$ by multiplicativity. Now since $x^{-1/2^m} \in \I$ for large
enough $m$, hence $K(1)^2 = K(x^{1/2^m}) K(x^{-1/2^m}) = 0$, which is
false. We conclude that $K / K(1)$ is positive on $\I_+$.

Now define $g : \ln \I_+ \to \R$ via: $g(x) := \ln (K(e^x)/K(1))$. Also
choose any compact subinterval $I_0 := [a,b]$ of $I$, with $\max(0, \inf
I) < a < 1 < b < \sup I$. We first claim that $x \mapsto K(e^x)$ is
Lebesgue measurable on $\ln I_0$, whence the restriction $g : \ln I_0 \to
\R$ is also Lebesgue measurable. To see the claim, given a Borel subset
$S$ in the image of $K \circ \exp : [\ln a, \ln b] \to \R$, the set
$K^{-1}(S) \subset [a,b]$ is Lebesgue measurable by assumption.
Therefore by the Borel regularity of Lebesgue measure, there exists a
Borel set $S'$ and a null Lebesgue set $N$ such that $K^{-1}(S) = S'
\Delta N$. But then $\ln K^{-1}(S) = (\ln S') \Delta (\ln N)$. Since $\ln
: [a,b] \to \R$ is absolutely continuous, it follows that $\ln K^{-1}(S)$
is also Lebesgue measurable, whence $K \circ \exp$ is Lebesgue measurable
on $\ln I_0$. Thus $g$ is Lebesgue measurable on $\ln I_0$.
Since $K/K(1)$ is multiplicative, it follows that $g$ is also additive on
$\ln \I_+ \supset \ln I_0 \supset \{ 0 \}$. Therefore $g$ is linear on
$\ln \I_+$ by Theorem \ref{Tsierpinsky}; say $g(x) = \alpha x$.
Reformulating, $K(x) = K(1) x^\alpha$ for all $0 < x \in \I$, for some
$\alpha \in \R$.

Next, if $0 \in \I$, then since $K$ is nonconstant on $\I$, choose $x_0
\in \I$ such that $K(x_0) \neq K(1)$. Then,
\[ \frac{K(0)}{K(1)} = \frac{K(0 \cdot x_0)}{K(1)} = \frac{K(0)}{K(1)}
\cdot \frac{K(x_0)}{K(1)}, \]

\noindent which implies that $K(0) = 0$.
Finally, if $\I \not\subset [0,\infty)$, define $\I'_- := \I \cap
(-\sqrt{\sup \I}, 0)$. Then $x^2 \in \I$ whenever $x \in \I'_-$, so we
compute: $\frac{K(x)^2}{K(1)^2} = \frac{K(x^2)}{K(1)} = |x|^{2 \alpha}$.
Therefore $K(x)/K(1) = \varepsilon(x) |x|^\alpha$, where $\varepsilon(x)
= \pm 1$. We now show that $\varepsilon$ is constant on $\I'_-$. Indeed,
if $x<y<0$ are in $\I$, then compute using that $y/x \in (0,1) \subset
\I$:
\[ \varepsilon(y) |y|^\alpha = \frac{K(y)}{K(1)} = \frac{K(x)}{K(1)}
\frac{K(y/x)}{K(1)} = \varepsilon(x) |x|^\alpha (y/x)^\alpha =
\varepsilon(x) |y|^\alpha. \]

\noindent This shows that $\varepsilon$ is constant on $\I'_-$, which in
turn implies the assertion (5) on all of $\I'_-$. We now show that (5)
holds on all of $\I_-$. Indeed, given any $x \in \I_-$ and $a_0 \in \I
\cap (1,\infty)$, there exists $m \in \N$ and $y \in \I'_-$ such that $y
a_0^m = x$. Hence by multiplicativity of $K / K(1)$ on $\I$,
\begin{equation}\label{Emult}
\frac{K(x)}{K(1)} = \frac{K(y)}{K(1)} \cdot \left( \frac{K(a_0)}{K(1)}
\right)^m = \varepsilon(y) |y|^\alpha \cdot (|a_0|^\alpha)^m =
\varepsilon(y) |x|^\alpha.
\end{equation}

\noindent This proves that $K(x) = c \phi_\alpha$ or $c \psi_\alpha$ for
some $\alpha \in \R$ and $c = K(1)$, on all of $\I$. We conclude that
$(1) \implies (5)$.
\end{proof}

As a consequence of Theorem \ref{Tmult}, we immediately obtain the
following corollary.

\begin{corollary}[Multiplicative Cauchy functional equation]\label{Cmult}
Let $I \subset \I \subset \R$ be intervals containing $1$ as an interior
point. Then the following are equivalent for a function $K : \I \to \R$
that is not identically zero on $\I$.
\begin{enumerate}
\item $K$ is multiplicative on $\I$ and Lebesgue measurable on $I$.

\item Either $K \equiv 1$ on $\I$, or $K \equiv \phi_\alpha$ or
$\psi_\alpha$ on $\I$ for some $\alpha \in \R$. \end{enumerate}
\end{corollary}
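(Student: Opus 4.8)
The plan is to derive Corollary \ref{Cmult} directly from Theorem \ref{Tmult}; the only new ingredient needed is that genuine multiplicativity on $\I$ pins down the value of $K$ at $1$. First I would note that since $1$ is an interior point of $\I$, plugging $x = y = 1$ into $K(xy) = K(x)K(y)$ gives $K(1) = K(1)^2$, so $K(1) \in \{0,1\}$. If $K(1) = 0$, then for every $x \in \I$ we have $K(x) = K(x \cdot 1) = K(x)K(1) = 0$, forcing $K \equiv 0$ on $\I$ and contradicting the standing hypothesis that $K$ is not identically zero. Hence $K(1) = 1$. This is the single place where the hypothesis "not identically zero on $\I$" is used.

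With $K(1) = 1 \neq 0$, the hypotheses of condition (1) of Theorem \ref{Tmult} are satisfied: $K/K(1) = K$ is multiplicative on $\I$ and $K$ is Lebesgue measurable on $I$. Applying the theorem, condition (5) holds, so $K$ is either a nonzero constant on $\I$, or a scalar multiple $c\,\phi_\alpha$ or $c\,\psi_\alpha$ of one of the power functions for some $\alpha \in \R$. In each case I would evaluate at $x = 1$: if $K$ is the constant $c$ then $c = K(1) = 1$, and since $\phi_\alpha(1) = \psi_\alpha(1) = 1$ the scalar multiple satisfies $c = K(1) = 1$ as well. This produces exactly the three alternatives listed in condition (2), establishing $(1) \implies (2)$.

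For the converse $(2) \implies (1)$, I would simply check that each of $K \equiv 1$, $K = \phi_\alpha$, and $K = \psi_\alpha$ is multiplicative on all of $\R$ (hence on $\I$), using $|xy|^\alpha = |x|^\alpha |y|^\alpha$ and $\sgn(xy) = \sgn(x)\sgn(y)$, with the convention $\phi_\alpha(0) = \psi_\alpha(0) = 0$ handling the case where a factor vanishes; moreover each of these maps is continuous, hence Lebesgue measurable on $I$, and none is identically zero on $\I$. Since the substantive analytic content is already contained in Theorem \ref{Tmult}, I do not expect any genuine obstacle here — the proof is essentially a bookkeeping argument layered on top of that theorem, the only subtlety being the elimination of $K(1) = 0$.
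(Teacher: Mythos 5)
Your proposal is correct and follows essentially the same route as the paper: rule out $K(1)=0$ via multiplicativity and the nonvanishing hypothesis, conclude $K(1)=1$ from $K(1)=K(1)^2$, and then invoke Theorem \ref{Tmult}. The only (welcome) extra detail you supply is explicitly evaluating at $x=1$ to force the scalar multiple in Theorem \ref{Tmult}(5) to equal $1$, a step the paper leaves implicit.
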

\begin{proof}
Clearly $(2) \implies (1)$. Conversely, if $K(1) = 0$, then $K \equiv 0$
on $\I$ by multiplicativity. Thus if $K \not\equiv 0$ then $K(1) = K(1)^2
\neq 0$. But then $K(1) = 1$, whence $K = K / K(1)$ is multiplicative on
$\I$. Now (1) follows by Theorem \ref{Tmult}.
\end{proof}

\begin{remark}\label{Rothers}
Theorem \ref{Tmult} also classifies all maps $K : \I \to \R$ such that $K
/ K(1)$ is multiplicative, and which are (a) Borel measurable, (b)
monotone, (c) continuous, or (d) differentiable, $C^n$ for some $n \in
\N$, or smooth on $I_\pm$ for some interval $1 \in I \subset \I$. The
reason these conditions are equivalent is that they are all satisfied by
the functions listed in (5) and imply Lebesgue measurability (1).
Moreover, we classify all multiplicative maps which are continuous or
$C^n$ on $\R$; see Corollary \ref{Csmooth}. Similar results can also be
proved under other hypotheses; see e.g.~\cite[Lemma 4.3]{Milgram}. 
\end{remark}

\section{Proof of the Main Theorem}\label{Sproof}

We now show the main result of this paper. In order to prove the linear
independence part of the result, we need the following generalization to
semigroups of the Dedekind Independence Theorem (see \cite[Chapter II,
Theorem 12]{artin_galois}). We include a short proof of this result for
convenience. 

\begin{lemma}\label{Ldedekind}
Suppose $(G,\cdot)$ is any semigroup and $\F$ any field. Let $n \geq 1$
and let $\chi_1, \dots, \chi_n : (G, \cdot) \to \F$ denote pairwise
distinct multiplicative maps that are not identically zero on $G$. Then
$\chi_1, \dots, \chi_n$ are $\F$-linearly independent.
\end{lemma}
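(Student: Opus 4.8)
The plan is to run the classical Dedekind--Artin argument for linear independence of characters, but to check that it never uses the existence of an identity element or of inverses in $G$ --- only closure under the product. The normalization $\chi_i(1) = 1$ available in the group setting is replaced here by the hypothesis that each $\chi_i$ is not identically zero, and this turns out to be exactly what is needed.

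I would induct on $n$. For $n = 1$: a relation $a_1 \chi_1 \equiv 0$ with $\chi_1 \not\equiv 0$ forces $a_1 = 0$ by evaluating at any $g \in G$ with $\chi_1(g) \neq 0$. For the inductive step, assume the statement for every collection of fewer than $n$ pairwise distinct, not-identically-zero multiplicative maps, and suppose $\sum_{i=1}^n a_i \chi_i \equiv 0$ on $G$ with some $a_i$ nonzero. If some $a_j = 0$, then the remaining $n-1$ maps are still pairwise distinct and nonzero, so the inductive hypothesis gives $a_i = 0$ for all $i$, a contradiction; hence we may assume every $a_i \neq 0$. Since $\chi_1 \neq \chi_2$, fix $g_0 \in G$ with $\chi_1(g_0) \neq \chi_2(g_0)$. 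For every $g \in G$, multiplicativity gives $\sum_i a_i \chi_i(g_0) \chi_i(g) = \sum_i a_i \chi_i(g_0 g) = 0$; subtracting $\chi_1(g_0)$ times the original relation $\sum_i a_i \chi_i(g) = 0$ yields
\[
\sum_{i=2}^n a_i \bigl( \chi_i(g_0) - \chi_1(g_0) \bigr) \chi_i(g) = 0 \qquad \text{for all } g \in G.
\]
This is a linear dependence among the $n-1$ pairwise distinct, not-identically-zero multiplicative maps $\chi_2, \dots, \chi_n$, whose $\chi_2$-coefficient $a_2\bigl( \chi_2(g_0) - \chi_1(g_0) \bigr)$ is nonzero, contradicting the inductive hypothesis. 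This completes the induction.

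I do not anticipate a genuine obstacle: the only point requiring care is to confirm that no step invoked a group structure. Forming the element $g_0 g$ uses only that $G$ is closed under its binary operation, and the hypothesis $\chi_i \not\equiv 0$ (in place of $\chi_i(1) = 1$) is precisely what makes both the base case work and the shortened list $\chi_2,\dots,\chi_n$ admissible for the inductive hypothesis. In other words, the content of the lemma is the observation that the textbook proof is already ``identity-free'' and ``inverse-free.''
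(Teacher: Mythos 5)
Your proof is correct and follows essentially the same route as the paper's: the classical Dedekind--Artin induction, evaluating the relation at $g_0 g$ and subtracting $\chi_1(g_0)$ times the original relation to drop to $n-1$ characters. If anything, your version is slightly more careful than the paper's, since you first reduce to the case where all coefficients are nonzero, which guarantees that the shortened relation is genuinely nontrivial before invoking the inductive hypothesis.
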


\begin{proof}
The proof is by induction on $n$. For $n=1$ the result is clear. Suppose
it holds for $n-1 \geq 1$, and suppose $T := \sum_{j=1}^n c_j \chi_j$ is
identically zero as a function on $G$. Now choose $g_n \in G$ such that
$\chi_n(g_n) \neq \chi_1(g_n)$. Then we obtain:
\[ 0 = T(g g_n) - T(g) \chi_n(g_n) = \sum_{j=1}^n c_j (\chi_j(g)
\chi_j(g_n) - \chi_j(g) \chi_n(g_n)) = \sum_{j=1}^{n-1} c_j (\chi_j(g_n)
- \chi_n(g_n)) \chi_j(g). \]

\noindent Since $\chi_1(g_n) \neq \chi_n(g_n)$, the above sum is a
nontrivial linear combination of the (pairwise distinct) characters
$\chi_1, \dots, \chi_{n-1}$. Hence by the induction hypothesis, $c_1 =
\dots = c_{n-1} = 0$, which then implies that $c_n = 0$ as well. This
concludes the proof. 
\end{proof}

We can now prove our main result. Note that when there is no constraint
on the domain of the function $K$ in our main theorem, i.e., when $\I =
\R$, the function automatically satisfies Cauchy's multiplicative
functional equation \eqref{Ecauchy}(b), and the Main Theorem follows
immediately from Theorem \ref{Tmult}. However, as we now show, proving
that $K$ satisfies Equation \eqref{Ecauchy}(b) on $\I$ under the domain
constraint is much more involved. 

\begin{proof}[{\bf Proof of the Main Theorem}]
For ease of exposition, we write out the proof in several steps.\medskip

\noindent {\bf Step 1.}
We first show the equivalence for $n \geq 3$. It is clear that $(3)
\implies (2) \implies (1)$. We now show that $(1) \implies (2)$. Suppose
to the contrary that $K(1) = 0$. Define $u_x := (1, \dots, 1, x)^T$ for
$x \in (\sqrt{\inf \I_+}, \sqrt{\sup \I_+})$. Now examine the minor of
$K[u_x u_x^T] \in \bp_n^1(\R)$, formed by the last two rows and columns.
We conclude that $K \equiv 0$ on $(\sqrt{\inf \I_+}, \sqrt{\sup \I_+})$.
Note here that $\inf \I_+ < 1 < \sup \I_+$ by assumption.

We now claim that $K \equiv 0$ on $\I$, which shows by contradiction that
$(1) \implies (2)$. The first step is to show that $K \equiv 0$ on $(1,
(\sup \I)^{1-2^{-m}})$ for all $m \geq 1$. This was shown in the
preceding paragraph for $m=1$, and we show it for all $m$ by induction.
Thus, given $x \in (1, (\sup \I)^{1-2^{-m}})$ for $m \geq 2$,
\[ x_m := x^{2(2^{m-1}-1)/(2^m-1)} \in (1, (\sup \I)^{1 - 2^{-(m-1)}}).
\]

\noindent Now set $y_m := (\sup \I)^{1/(2^m-1)}$, and define $u_m :=
\sqrt{x_m} (1, \dots, 1, y_m)^T \in \R^n$. Then it is easily verified
that $u_m u_m^T \in \bp_n^1(\I_+)$, whence the lower rightmost $2 \times
2$ minor of $K[u_m u_m^T]$ is zero. Since $K(x_m) = 0$ by the induction
hypothesis, we conclude that $0 = K(x_m y_m) = K(x)$.
This shows that $K \equiv 0$ on $[1,\sup \I)$. A similar argument shows
that $K \equiv 0$ on $(\inf \I_+, 1]$. Thus, $K \equiv 0$ on $\I_+$. If
$0 \in \I$ then evaluating $K$ entrywise on ${\bf 1}_{1 \times 1} \oplus
{\bf 0}_{(n-1) \times (n-1)} \in \bp_n^1(\I)$ shows that $K(0) = 0$ as
well. Finally, if $\I_-$ is nonempty, then the assumptions on $\I$ imply
that $\I_- \subset -\I_+$. Thus, given $x \in \I_-$, evaluating $K$
entrywise on the matrix $\begin{pmatrix} |x| & x\\ x & |x| \end{pmatrix}
\oplus {\bf 0}_{(n-2) \times (n-2)} \in \bp_n^1(\I)$ shows that $K(x) =
0$ as well. We conclude that $K \equiv 0$ on $\I$, which shows by
contradiction that $(1) \implies (2)$.\medskip

\noindent {\bf Step 2.}
We next prove that $(2) \implies (3)$ for $n \geq 3$.
Suppose (2) holds and $K$ is nonconstant on $\I$. Define $I' :=
(\sqrt{\inf \I_+}, \sqrt{\sup \I_+}) \subset \I$. Then given $x,y \in
I'$, it is clear that $uu^T \in \bp_n^1(\I)$, where $u := (1, 1, \dots,
1, x, y)^T \in (I')^n$. Considering the $2 \times 2$ minor corresponding
to the $n-2$ and $n$th rows and the last two columns shows that $K/K(1)$
is multiplicative on $I'$. Since $K/K(1)$ is also Lebesgue measurable on
$I \cap I'$, Theorem \ref{Tmult} implies that $K$ satisfies (3) on $I'$.
Now set $u := (1, 1, \dots, 1, x)^T$ and $A := u u^T$, with $x \in I'$.
Applying (2) to $A$, we conclude that (3) holds on $\I_+$.

It remains to show that (3) holds even when $\I \not\subset (0,\infty)$.
First if $\I = [0,\infty)$ then choose $a>0$ such that $K(a) \neq K(0)$.
Applying $K$ entrywise to the matrix $a {\bf 1}_{1 \times 1} \oplus {\bf
0}_{(n-1) \times (n-1)} \in \bp_n^1(\I)$ yields $K(0)=0$, which proves
(3). Finally, suppose $\I_-$ is nonempty. There are then two cases:
\begin{enumerate}
\item $K(0) \neq 0$. We then claim that $K$ is constant on $\I$. Indeed,
choose $a \in \I_-$, define $u := \sqrt{|a|} (1,-1, 0, \dots, 0)^T \in
\R^n$, and $A := u u^T \in \bp_n^1(\I)$. Now the minor corresponding to
the first and third rows and columns of $K[A]$ is zero, whence $K(0) =
K(|a|)$. Considering the minor corresponding to the first and second
rows, and second and third columns, shows that $K(a) = K(|a|) = K(0)$.
This implies that $K$ is constant on $(\inf \I, |\inf \I|)$. Repeating
the same argument with $u := (\sqrt{a}, 0, \dots, 0)$ for all $0 < a \in
\I$ shows that $K$ is constant on $\I$.

\item $K(0) = 0$. Since $K$ is nonconstant, it follows by the above
analysis that (3) holds on $\I_+$. Thus for $a \in \I_-$, $K(a) = \pm
K(|a|) = \pm K(1) |a|^\alpha$ for some $\alpha \in \R$. In other words,
there exists $\varepsilon : \I_- \to \{ \pm 1 \}$ such that $K(a) =
K(|a|) \varepsilon(a)$ for all $a \in \I_-$. It remains to show that
$\varepsilon$ is constant. Indeed, if $x,y \in \I_-$ such that
$-\sqrt{|\inf \I|} < x<y<0$, then setting $u := (1, \dots, 1, y/x, x)^T$,
it follows that $K[u u^T] \in \bp_n^1(\R)$. Since the minor corresponding
to the $n-2$ and $n$th rows, and the $(n-2)$ and $(n-1)$st columns is
zero, we compute:
\[ \varepsilon(y) |y|^\alpha = \frac{K(x) K(y/x)}{K(1)^2} =
\varepsilon(x) |x|^\alpha |y/x|^\alpha, \]

\noindent from which it follows that $\varepsilon(y) = \varepsilon(x)$
whenever $x,y \in \I_-$ and $-\sqrt{|\inf \I|} < x < y < 0$. Finally if
$\inf \I \leq -1$, then we show that $\varepsilon$ is constant on all of
$\I_-$, which proves (3) on all of $\I$. Choose $y,a,x \in \I$ such that
\[ \inf \I \leq y \leq -\sqrt{ |\inf \I|} < a < 0 < |y| < x < \sup \I, \]

\noindent and define $u := \sqrt{x}(a/x, y/x, 1, \dots, 1)^T \in \R^n$.
One then verifies that $u u^T \in \bp_n^1(\I)$; now since the minor of
$K[u u^T]$ corresponding to the first and third columns and the first two
rows is zero, we obtain:
\[ \varepsilon(a) |a|^\alpha \cdot (ay/x)^\alpha = \varepsilon(y)
|y|^\alpha \cdot (a^2/x)^\alpha, \]

\noindent which shows that $\varepsilon(y) = \varepsilon(a)$. Therefore
$\varepsilon$ is constant on all of $\I_-$, as desired.
\end{enumerate}

To conclude the proof, the linear independence claim is immediate from
Lemma \ref{Ldedekind}, which we apply to the semigroup $G :=
(\max(\inf(\I),-1),1) \subset \I$.
\end{proof}

\begin{remark}
The assumptions on $\I$ in the Main Theorem are necessary in order to
obtain the above characterizations. For instance, if $S := \I \cap
(-\infty, -\sup \I]$ is nonempty, then every map $: S \to \R$ can be
extended to a map $K : \I \to \R$ preserving positivity on $\bp_n^1(\I)$,
since no element of $S$ can occur in any matrix in $\bp_n^1(\I)$. Even if
$\I \subset (0,\infty)$, there can exist other solutions if $\I_+$ is not
open. For instance, if $\sup \I$ or $\inf \I$ belongs to $\I$, then the
Kronecker delta functions $\delta_{x,\sup \I}$ or $\delta_{x,\inf \I}$
preserve Loewner positivity on $\bp_n^1(\I)$.
\end{remark}

The main theorem of the paper characterizes functions mapping
$\bp_n^1(\I)$ into itself when $n \geq 3$. It is natural to ask if the
same result holds when $n=2$. We now show that this is not
the case. 

\begin{proposition}
Let $I \subset \I \subset \R$ be intervals containing $1$ as an interior
point. Suppose $\I \cap (0,\infty)$ is open, and $\pm \sup \I \not\in
\I$. Then the following are equivalent for a function $K: \I \to \R$
which is Lebesgue measurable on $I$ and not identically $0$ on $\I$:
\begin{enumerate}
\item $K[-]$ maps $\bp_2^1(\I)$ to $\bp_2^1(\R)$.
\item There exists a function $\varepsilon : \I_- = \I \cap (-\infty,0)
\to \{ \pm 1 \}$ such that:
\begin{itemize}
\item $\varepsilon$ is Lebesgue measurable when restricted to $I \cap
\I_-$;
\item Either $K$ is a positive constant, or $K$ is a positive scalar
multiple of $\phi_\alpha$ or $\psi_\alpha$ on $\I \cap [0,\infty)$ for
some $\alpha \in \R$; and
\item $K(x) \equiv \varepsilon(x) K(|x|)$ for all $x \in \I_-$.
\end{itemize}
\end{enumerate}
\end{proposition}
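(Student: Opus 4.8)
The plan is to follow the same minor-based strategy used in the Main Theorem, but simplified to the $2\times 2$ case where the rank condition becomes a single equation. The direction $(2)\implies(1)$ is a routine verification: if $A = uu^T \in \bp_2^1(\I)$ with $u = (u_1,u_2)^T$, then the off-diagonal entry is $u_1 u_2$ and one checks directly that $K(u_1^2)K(u_2^2) = K(u_1 u_2)^2$ in each of the listed cases (constant, $\phi_\alpha$, or $\psi_\alpha$ composed with the sign function $\varepsilon$ on negative entries), so $K[A]$ has rank at most $1$. The point of the proposition is really $(1)\implies(2)$.

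For $(1)\implies(2)$, I would first argue that $K(1)\neq 0$: if $K(1)=0$, then feeding $u_x := (1,x)^T$ for $x$ in a neighborhood of $1$ into the rank condition gives $K(x^2) \cdot 0 = K(x)^2$, so $K\equiv 0$ near $1$; then one propagates this to all of $\I_+$ by the square-root/bootstrapping argument of Step~1 of the Main Theorem's proof, then to $0$ (if $0\in\I$) via ${\bf 1}_{1\times1}\oplus{\bf 0}_{1\times1}$, and finally to $\I_-$ via $\begin{pmatrix} |x| & x \\ x & |x| \end{pmatrix}$, contradicting $K\not\equiv 0$. With $K(1)\neq 0$, replace $K$ by $K/K(1)$ if needed and work on $\I_+$: for $x,y$ in $I' := (\sqrt{\inf\I_+},\sqrt{\sup\I_+})$, the matrix $uu^T$ with $u=(x,y)^T$ lies in $\bp_2^1(\I_+)$, so the vanishing $2\times 2$ minor gives $K(x^2)K(y^2)=K(xy)^2$; after the substitution $x\mapsto\sqrt{x}$ this says $K/K(1)$ is multiplicative on $I'$, and $u=(1,x)^T$ with $x\in I'$ extends this to $\I_+$. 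Lebesgue measurability on $I\cap I'$ together with Theorem~\ref{Tmult} then forces $K$ to be a positive constant, or a positive multiple of $\phi_\alpha$ or $\psi_\alpha$, on $\I_+$ (and $K(0)=0$ if $0\in\I$ and $K$ is nonconstant, by the usual $a{\bf 1}_{1\times1}\oplus{\bf 0}_{1\times1}$ argument).

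It remains to handle $\I_-$, and this is where the $n=2$ case genuinely differs from $n\ge 3$: the rigidity that pinned down the sign function $\varepsilon$ for $n\ge3$ is no longer available. For $x\in\I_-$, applying the rank condition to $\begin{pmatrix}|x|&x\\x&|x|\end{pmatrix}$ gives $K(x)^2 = K(|x|)^2$, hence $K(x) = \varepsilon(x)K(|x|)$ for some $\varepsilon(x)\in\{\pm1\}$ — but no further constraint links $\varepsilon(x)$ across different $x$, because a rank-$1$ matrix in $\bp_2^1(\I)$ with a negative off-diagonal entry must have the form $\pm\sqrt{ab}$ for positive diagonal entries $a,b$, and there is no $2\times 2$ minor relating two distinct negative values. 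So $\varepsilon$ is an \emph{arbitrary} $\{\pm1\}$-valued function subject only to the requirement that $K = \varepsilon\cdot(K\circ|\cdot|)$ be Lebesgue measurable on $I\cap\I_-$; since $K\circ|\cdot|$ is already measurable and nonvanishing there, this is equivalent to measurability of $\varepsilon$ on $I\cap\I_-$. This yields exactly condition $(2)$.

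**The main obstacle** I anticipate is not conceptual but bookkeeping: carefully confirming that for $n=2$ no hidden relation among the negative values sneaks back in — i.e., that every entry of a matrix in $\bp_2^1(\I)$ is a product $u_iu_j$ with the $u_i$ constrained only by $u_i^2\in\I$, so that the off-diagonal entry ranges freely and independently of which diagonal entries appear. One must also double-check the edge cases in the measurability transfer (the interval $I'$ may be strictly smaller than $I$, and $\I_-$ may meet $I$ in a small set), and verify the case $K(0)\neq 0$: as in the Main Theorem one should check whether a nonconstant measurable solution can have $K(0)\neq0$, and argue (via the rank-$1$ condition on $\sqrt{|a|}(1,-1)^T$-type matrices, if such lie in $\bp_2^1(\I)$, or directly) that this forces $K$ constant — so the nonconstant branch of $(2)$ indeed has $K(0)=0$, consistent with the $\phi_\alpha,\psi_\alpha$ normalization.
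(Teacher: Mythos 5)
Your proposal is correct and follows essentially the same route as the paper's proof: both derive multiplicativity of $K/K(1)$ on $\I_+$ from the same two rank-one matrices with off-diagonal entry $\sqrt{ab}$ (your $(x,y)$ and $(1,x)$ outer products are exactly the paper's $\begin{pmatrix} a & \sqrt{ab}\\ \sqrt{ab} & b\end{pmatrix}$ and $\begin{pmatrix} ab & \sqrt{ab}\\ \sqrt{ab} & 1\end{pmatrix}$ -- note that the first identity alone is not multiplicativity, you do need both), rule out $K(1)=0$, invoke Theorem \ref{Tmult} on $\I_+$, and observe that for $n=2$ no minor links the values of $\varepsilon$ at distinct negative arguments, so only measurability of $\varepsilon$ survives. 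The sole cosmetic difference is in $(2)\Rightarrow(1)$, where the paper factors out $\varepsilon$ via the Schur product theorem instead of verifying the minor identity case by case.
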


\begin{proof}
First observe that for all $\varepsilon : \I_- \to \{ \pm 1 \}$ and $A
\in \bp_2^1(\I)$, $K[A] \in \bp_2^1(\R)$ if and only if $(K \cdot
\varepsilon)[A] \in \bp_2^1(\R)$, by the Schur product theorem. 
%It follows that if $K : \I \to \R$ is a positive constant or a positive
%scalar multiple of $\phi_\alpha$ on $\I \cap [0,\infty)$ and there exists
%$\varepsilon : \I_- \to \{ \pm 1 \}$ with the given properties, 
Thus, every $K[-]$ satisfying (2) preserves positivity on $\bp_2^1(\I)$.
Conversely, we compute:
\[ a,b,ab \in \I_+ \quad \implies \quad \begin{pmatrix} a & \sqrt{ab}\\
\sqrt{ab} & b \end{pmatrix}, \ \begin{pmatrix} ab & \sqrt{ab}\\ \sqrt{ab}
& 1 \end{pmatrix} \quad \in \bp_n^1(\I), \]

\noindent from which it follows using the hypotheses that
\[ K(a) K(b) = K(\sqrt{ab})^2 = K(ab) K(1) = K(ab). \]

\noindent In other words, $K$ is multiplicative on $\I_+$.
%(Note that this argument also shows that $(2) \implies (3)$ on $\I_+$
%for $n \geq 3$.)
Now note that if $K(1) = 0$ then $K \equiv 0$ on $\I_+$ by
multiplicativity; moreover, applying $K$ entrywise to the matrices
$\begin{pmatrix} 1 & 0\\ 0 & 0 \end{pmatrix}$ and $\begin{pmatrix} |x| &
x\\ x & |x| \end{pmatrix}$ for $x \in \I_-$ shows that $K \equiv 0$ on
$\I$. We conclude by the hypotheses that $K(1) \neq 0$. Now applying
Theorem \ref{Tmult} to $\I_+$, we conclude that on $\I \cap [0,\infty)$,
$K/K(1)$ equals either $\phi_\alpha \equiv \psi_\alpha$ or a constant.

Next, the only matrices in $\bp_2^1(\I)$ with a zero entry are of the
form ${\bf 0}_{2 \times 2}$, $\begin{pmatrix} a & 0\\ 0 & 0
\end{pmatrix}$, or $\begin{pmatrix} 0 & 0\\ 0 & a \end{pmatrix}$, with $a
\in \I_+$. Considering the two cases $K(0) = 0$ and $K(0) \neq 0$, one
verifies that the result holds if $\I \subset [0,\infty)$.
Finally, applying $K$ entrywise to the matrix $\begin{pmatrix} |x| & x\\
x & |x| \end{pmatrix} \in \bp_2^1(\I)$ for $x \in \I_-$ yields: $K(x) =
\pm K(|x|)$. In other words, $K(x) = \varepsilon(x) K(|x|)$ for some
$\varepsilon : \I_- \to \{ \pm 1 \}$. Moreover, $\varepsilon(x) = K(|x|)
/ K(x)$ is Lebesgue measurable on $I \cap \I_-$, which concludes the
proof.
\end{proof}

\section{Preserving positivity on complex rank one
matrices}\label{Scomplex}

It is natural to ask if the Main Theorem in this paper has an analogue
for matrices with complex entries.  We now provide a positive answer to
this question. Although we are interested mainly in matrices with entries
in $D(0,R)$ for $R>1$, we will prove a characterization result for more
general regions $G \subset \C$, which include sets such as $G =
\overline{D(0,1)} \cup (-1-\epsilon,1+\epsilon)$, $G = S^1 \cup
(-1-\epsilon, 1+\epsilon)$, and $G = (-R,R) \cup (D(0,R) \setminus
D(0,r))$ for $0 \leq r \leq 1 < R$ and $\epsilon > 0$. In order to state
this result, we first introduce a family of {\it complex power functions}
$\Psi_{\alpha, \beta}$ for $\alpha \in \R$ and $\beta \in \Z$ by
\begin{equation}
\Psi_{\alpha, \beta}(re^{i\theta}) := r^\alpha e^{i \beta \theta}, \quad
r > 0,\ \theta \in (-\pi, \pi], \qquad \Psi_{\alpha, \beta}(0) := 0. 
\end{equation}

\noindent Then $\Psi_{n,n}(z) = z^n$ and $\Psi_{n,-n}(z) =
\overline{z}^n$ for $n \in \Z$ and $z \neq 0$. Moreover,
$\Psi_{\alpha,\beta}$ restricted to $\R$ equals $\phi_\alpha$ if $\beta$
is even and $\psi_\alpha$ if $\beta$ is odd. Additionally,
$\Psi_{\alpha,\beta}$ is continuous on $\C^\times$ and multiplicative on
$\C$ for all $\alpha \in \R$ and $\beta \in \Z$.

In a forthcoming paper \cite{GKR-complex} we explore which of the maps
$\Psi_{\alpha,\beta}$ preserve Loewner positivity when applied entrywise
to Hermitian positive semidefinite matrices of a fixed order. The
following result provides an answer when additional rank constraints are
imposed.

\begin{theorem}\label{Tcomplex}
Suppose $n \geq 3$ and $G \subset \C$ satisfies:
\begin{itemize}
\item For all $z \in S^1$, the set $I_z := \{ a \in (0,\infty) : az \in G
\}$ is an interval containing $1$, but not its supremum if $\sup I_z >
1$.

\item $G$ is closed under the conjugation and modulus maps ($z \mapsto
\overline{z}, |z|$).

\item $\I := G \cap \R$ is an interval with $1$ as an interior point,
such that $\pm \sup \I \notin \I$.
\end{itemize}

\noindent Suppose $K : G \to \C$ is Lebesgue measurable on a sub-interval
$I \subset \I$ containing $1$ as an interior point, and either Baire
measurable or universally measurable \cite[Section 2]{Rosendal} when
restricted to (the topological group) $S^1$. Then the following are
equivalent for a function $K : G \to \C$:
\begin{enumerate}
\item $K \not\equiv 0$ on $G$, and $K[-]$ maps $\bp_n^1(G)$ to
$\bp_n^1(\C)$.

\item $K(1) > 0$, and $K/K(1) : G \to \C$ is multiplicative and
conjugation-equivariant.

\item $K(1) > 0$, and either $K \equiv K(1)$ on $G$ or there exist
$\alpha \in \R$ and $\beta \in \Z$ such that $K \equiv K(1) \cdot
\Psi_{\alpha,\beta}$ on $G$.
\end{enumerate}

\noindent Moreover, the maps $\{ \Psi_{\alpha,\beta} : \alpha \in \R,
\beta \in \Z \} \cup \{ K \equiv 1 \}$ are linearly independent on
$D(0,r)$ for any $0 < r \leq \infty$.
\end{theorem}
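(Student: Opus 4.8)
The plan is to follow the template of the Main Theorem as closely as possible, reducing the complex statement to the real one plus an analysis of the behavior on the circle $S^1$. First I would establish $(3) \implies (2) \implies (1)$: the last two implications are the easy directions, since each $\Psi_{\alpha,\beta}$ is multiplicative and conjugation-equivariant (as noted just before the theorem), and any nonzero multiplicative conjugation-equivariant map $\chi$ satisfies $\chi[vv^*] = (\chi(v_i)\overline{\chi(v_j)})_{ij} = ww^*$ with $w_i := \chi(v_i)$, hence preserves $\bp_n^1$. The heart of the argument is $(1) \implies (2)$ and then $(2) \implies (3)$.

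For $(1) \implies (2)$: given a rank-one $A = vv^* \in \bp_n^1(G)$, the matrix $K[A]$ is rank $\leq 1$ and PSD, so all its $2\times 2$ minors vanish and its diagonal is nonnegative. Exactly as in Step 2 of the Main Theorem's proof, plugging in vectors of the form $u = (1,\dots,1,x,y)^T$ (now with $x,y$ ranging over a suitable region of $G$, using that $G$ is closed under conjugation and modulus and that each $I_z$ is an interval containing $1$) forces the multiplicative identity $K(x)\overline{K(y)} = K(xy)K(1)$ wherever the relevant products stay in $G$; taking $y=1$ gives $K(1) = \overline{K(1)} K(1)/K(1)\cdot\ldots$, more carefully $K(x)\overline{K(1)} = K(x)K(1)$ forces $K(1) \in \R$, and the diagonal condition $K(|v_i|^2) = |K(v_i)|^2 \geq 0$ together with the square-root-closure of $\I_+$ forces $K(1) \geq 0$; the nonvanishing of $K(1)$ is obtained by the same square-root argument as in Theorem \ref{Tmult} (if $K(1)=0$ then $K\equiv 0$ on $G$, contradicting (1)). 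Rescaling by $K(1)$, we may assume $K(1)=1$ and $K$ is multiplicative on $G$; conjugation-equivariance $K(\overline z) = \overline{K(z)}$ follows by comparing the minor identities for $vv^*$ and $\overline v\,\overline v^* = \overline{vv^*}$, or directly from $K(z)\overline{K(\overline z)} = K(z\overline z) = K(|z|^2) \in \R_{\geq 0}$ combined with $|K(z)| = |K(\overline z)|$.

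For $(2) \implies (3)$: restrict $K$ to $\I = G\cap\R$. Here $K$ is multiplicative, $K(1)=1$, and Lebesgue measurable on $I$, so by Corollary \ref{Cmult} either $K|_\I \equiv 1$ or $K|_\I = \phi_\alpha$ or $\psi_\alpha$ for some $\alpha\in\R$. Next restrict $K$ to $S^1$: it is a multiplicative map $S^1 \to \C$, nonvanishing (same square-root argument), hence a homomorphism $S^1 \to \C^\times$ landing in $S^1$ (since $|K(z)|$ is a multiplicative map $S^1\to(0,\infty)$, which by the Baire/universal-measurability hypothesis and the standard automatic-continuity theorems for Polish groups \cite{Rosendal} is continuous, hence trivial as $S^1$ is compact). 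A measurable character of $S^1$ is $z\mapsto z^\beta$ for some $\beta\in\Z$; conjugation-equivariance is automatic. Now for general $z = re^{i\theta}\in G$ with $r>0$, write $z = r\cdot(z/r)$ with $r\in I_z\subset\I_+$ and $z/r\in S^1$; by multiplicativity $K(z) = K(r)K(z/r) = r^{\alpha}\cdot e^{i\beta\theta} = \Psi_{\alpha,\beta}(z)$ (or $K(z)=e^{i\beta\theta}$ in the constant-on-$\I$ case, which is $\Psi_{0,\beta}$ — but one must check consistency of the two sign choices $\phi$ vs $\psi$ with the parity of $\beta$, using that $\Psi_{\alpha,\beta}|_\R = \phi_\alpha$ if $\beta$ even and $\psi_\alpha$ if $\beta$ odd; if the parity forced by $S^1$ disagrees with the sign behavior on $\R_-$ this rules out that combination). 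Finally $K(0)=0$ follows exactly as in the Main Theorem, by choosing $z_0\in G$ with $K(z_0)\neq 1$ and using $K(0) = K(0\cdot z_0) = K(0)K(z_0)$.

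The main obstacle I anticipate is the circle analysis in $(2)\implies(3)$: passing from "measurable multiplicative map on $S^1$" to "$z\mapsto z^\beta$" requires the automatic-continuity machinery for Polish groups cited via \cite{Rosendal}, and one must handle the modulus map $|K(\cdot)|$ on $S^1$ carefully to conclude $K$ actually maps $S^1$ into $S^1$ before invoking the classification of continuous characters. A secondary technical point is bookkeeping the region constraints — ensuring enough products $xy$, square roots $\sqrt{x}$, and polar factorizations $z = r\cdot(z/r)$ remain inside $G$ to run all the minor arguments — which is exactly what the three hypotheses on $G$ (interval $I_z$ containing $1$, closure under conjugation and modulus, $\I$ an interval with $1$ interior) are designed to guarantee; this mirrors the role of the hypotheses on $\I$ in the real case. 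The linear independence of $\{\Psi_{\alpha,\beta}\}\cup\{1\}$ on $D(0,r)$ is immediate from Lemma \ref{Ldedekind} applied to the semigroup $D(0,\min(r,1))\setminus\{0\}$ under multiplication, since distinct pairs $(\alpha,\beta)$ give distinct multiplicative maps (they differ already on $\R_+$ or on $S^1\cap D(0,r)$) and none is identically zero.
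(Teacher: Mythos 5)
Your overall strategy coincides with the paper's: establish multiplicativity from vanishing $2\times 2$ minors, classify $K$ on $G\cap\R$ via the Main Theorem, classify $K|_{S^1}$ via the Banach--Pettis and Steinhaus--Weil automatic-continuity results from \cite{Rosendal}, glue the two through the polar decomposition $z=|z|\cdot(z/|z|)$, and deduce linear independence from Lemma \ref{Ldedekind}. Your $(2)\implies(3)$ and the independence argument are essentially the paper's. One small slip in an easy direction: your direct proof of $(2)\implies(1)$ evaluates $K$ at the coordinates $v_i$ of a vector with $vv^*\in\bp_n^1(G)$, but the $v_i$ need not lie in $G$; the paper sidesteps this by proving only $(3)\implies(1)$, where $\Psi_{\alpha,\beta}$ is defined and multiplicative on all of $\C$, and closing the cycle.

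The genuine gap is in $(1)\implies(2)$, at exactly the point you set aside as ``bookkeeping the region constraints.'' A minor argument with a vector such as $u=(x,z,1,\dots,1)^T$ ($x>0$, $z\in S^1$) requires \emph{every} entry of $uu^*$ to lie in $G$, in particular $x^2$; so it yields $K(xz)=K(x)K(z)/K(1)$ only for $0<x<\sqrt{\sup I_z}$, whereas statement (2) demands the identity for all $x\in I_z$. This is not guaranteed by the hypotheses on $G$ --- it is the same obstruction as in Step 1 of the proof of the Main Theorem, and the paper resolves it in two moves: first it reduces multiplicativity on all of $G$ to the single family of identities $K(z)=K(|z|)K(z/|z|)/K(1)$, using multiplicativity on $G\cap\R$ and on $S^1$ separately (Steps 2 and 3); then it proves that family by a dyadic induction with $x_m:=x^{2(2^{m-1}-1)/(2^m-1)}$ and $y_m:=(\sup I_z)^{1/(2^m-1)}$, pushing the identity from $x<\sqrt{\sup I_z}$ out to all of $(1,\sup I_z)$ (Step 4). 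In your write-up the polar factorization appears only in $(2)\implies(3)$, where multiplicativity is already assumed; it is needed, together with the induction, already in $(1)\implies(2)$. Without that step, multiplicativity of $K/K(1)$ on $G$ is not established and the implication does not close. You should also isolate the case $K(0)\neq 0$ at the outset (the paper's Step 1 shows $K$ is then constant), since the later divisions by values of $K$ presuppose $K(0)=0$.
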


Note that $G$ needs to be closed under conjugation in Theorem
\ref{Tcomplex} because if $z \in G$ but $\overline{z} \notin G$, then $z$
can never occur as an entry of a matrix in $\bp_n(G)$. Similarly, if $z
\in G \setminus D(0, \sup(G \cap \R))$, then $z$ can never occur as an
entry of a matrix in $\bp_n(G)$.

\begin{proof}[{\bf Proof of Theorem \ref{Tcomplex}}]
We prove a cyclic chain of implications. That $(3) \implies (1)$ is
easily verified.
We next show that $(2) \implies (3)$. First note that since $K/ K(1)$ is
multiplicative and conjugation-equivariant on $S^1$, it is a group
endomorphism of $S^1$ into itself. Moreover, $K$ is Baire/universally
measurable on $S^1 \subset \C$. It follows by results by Banach-Pettis
(see \cite{Pettis} or \cite[Theorem 2.2]{Rosendal}) and by Steinhaus-Weil
(see \cite[Corollary 2.4]{Rosendal}) that $K|_{S^1}$ is continuous.
Therefore there exists $\beta \in \Z$ such that $K(z) = K(1) z^\beta$ for
$z \in S^1$. There are now two cases to consider. First suppose $K$ is
constant on $G \cap \R$. If $\beta \neq 0$, then
\[ K(0) = K(0 \cdot \exp(i \pi/\beta)) = K(0) \exp(i \pi/\beta)^\beta =
-K(0), \]

\noindent which implies that $K(0) = 0 = K(1)$. This contradicts (2), so
$\beta = 0$. Therefore by multiplicativity and the assumptions on $G$,
\[ K(z) = K(|z|) K(z/|z|) / K(1) = K(1) (z/|z|)^0 = K(1), \qquad \forall
z \in G \setminus \{ 0 \}. \]

\noindent Therefore $K$ is constant on $G$, proving (3). The other case
is if $K$ is nonconstant on $G \cap \R$. Then by the Main Theorem, $K
\equiv K(1) \phi_\alpha \equiv K(1) \psi_\alpha$ on $G \cap [0,\infty)$
for some $\alpha \in \R$. Now compute for $z \in G \setminus \{ 0 \}$:
\[ K(z) = K(|z| \cdot z/|z|) = \frac{1}{K(1)} K(|z|) K(z/|z|) = K(1)
|z|^\alpha (z/|z|)^\beta = K(1) \Psi_{\alpha,\beta}(z), \]

\noindent which shows (3).

Finally, we show that $(1) \implies (2)$. As the proof is intricate, we
divide it into four steps for ease of exposition.\medskip

\noindent {\bf Step 1.}
We first claim that if $K(0) \neq 0$, then $K \equiv K(0)$ on $G$ (from
which (2) follows).
Indeed, if $K(0) \neq 0$, then for all $0 < a \in G$, we have that $K(a)
= K(0)$ by considering the minor formed by the first two rows and columns
of $K[a {\bf 1}_{1 \times 1} \oplus {\bf 0}_{(n-1) \times (n-1)}] \in
\bp_n^1(\C)$. Now given $z \in G \setminus \{ 0 \}$, define $u :=
|z|^{-1/2} (z, |z|, 0, \dots, 0)^T$; then $u u^* \in \bp_n^1(G)$. Now the
vanishing of the minor formed by the first two columns and the first and
third rows of $K[u u^*] \in \bp_n^1(\C)$ implies that $K \equiv K(0)$ on
$G$.\medskip

\noindent {\bf Step 2.}
Given the previous step, we will assume that $K(0) = 0$ for the remainder
of the proof. We next claim that $K(1) > 0$, and $K/K(1)$ is
conjugation-equivariant on $G$ and multiplicative on $G \cap \R$. Indeed,
note that $K(1) \geq 0$ since $K[{\bf 1}_{n \times n}] \in \bp_n^1(\C)$.
If $K(1) = 0$ then $K(G \cap [0,\infty)) = 0$ by the Main Theorem. Now
given $z \in G \setminus \{ 0 \}$, define the vector $u := \sqrt{|z|}(1,
\dots, 1, \overline{z} / |z|)^T \in \C^n$. Applying $K$ entrywise to the
matrix $u u^* \in \bp_n^1(G)$, we conclude that $K/K(1)$ is
conjugation-equivariant on $G$, and therefore $K \equiv 0$ on $G$. Since
$K \not\equiv 0$ by hypothesis, it follows that $K(1) > 0$. Now use the
Main Theorem to infer that $K/K(1)$ is multiplicative on $G \cap
\R$.\medskip

\noindent {\bf Step 3.}
Next, we show that $K/K(1)$ is multiplicative on $S^1 \subset G$. Indeed,
given $z,z' \in S^1$, define $u := (z, \overline{z'}, 1, \dots, 1)^T \in
\C^n$; then $u u^* \in \bp_n^1(G)$, so $K[u u^*] \in \bp_n^1(\C)$. We
conclude from (1) that $K/K(1) : S^1 \to S^1$, by considering the
vanishing of the minor formed by the first and third rows and columns of
$K[u u^*] \in \bp_n^1(\C)$. Now consider the minor formed by the first
two columns and the first and third rows. The vanishing of this minor
implies that
\[ K(1) K(z') = K(\overline{z}) K(zz') = \overline{K(z)} K(zz') =
K(z)^{-1} K(zz'). \]

\noindent It follows that $K/K(1)$ is multiplicative on $S^1 \subset
G$.\medskip

\noindent {\bf Step 4.}
We now claim that $K(z) = K(|z|) K(z/|z|) / K(1)$ for all $z \in G
\setminus \{ 0 \}$. The claim would imply that $(1) \implies (2)$ (recall
that $K(0) = 0$), because if $z,z',zz' \in G \setminus \{ 0 \}$, then by
the hypotheses and the conclusions of the previous two steps,
\[ \frac{K(zz')}{K(1)} = \frac{ K(|zz'|) K(zz' / |zz'|) }{K(1)^2} =
\frac{K(|z|) K(|z'|) K(z/|z|) K(z'/|z'|)}{K(1)^4} = \frac{K(z)}{K(1)}
\cdot \frac{K(z')}{K(1)}. \]

\noindent Thus it suffices to prove the claim on all of $G \setminus \{ 0
\}$. By the previous step, the claim holds on $S^1$ and on $G \cap \R$.
Now suppose $z \in S^1$ and $0 < x < \sqrt{\sup I_z}$. Define $u := (x,
z, 1, \dots, 1)^T$; then $u u^* \in \bp_n^1(G)$. Now consider the minor
formed by the first and third columns, and second and third rows, of $K[u
u^*] \in \bp_n^1(\C)$. The vanishing of this minor yields:
\begin{equation}\label{Emult_sq_sup}
 \frac{K(xz)}{K(1)} = \frac{K(x)}{K(1)} \cdot \frac{K(z)}{K(1)}, \qquad
\forall z \in S^1, \ 0 < x < \sqrt{\sup I_z}. 
\end{equation}

It remains to show that Equation \eqref{Emult_sq_sup} also holds for $z
\in S^1$ and $x \in [\sqrt{\sup I_z}, \sup I_z)$, assuming that $\sup I_z
> 1$. This is proved similarly to Step $1$ in the proof of the Main
Theorem. Namely, given $x \in (1, (\sup I_z)^{1-2^{-m}})$ for $m \in \N$,
we claim that $K(xz) = K(x) K(z)/K(1)$. The proof is by induction on $m$;
the $m=1$ case was shown in the previous paragraph. Now suppose $m>1$;
then
\[ x_m := x^{2(2^{m-1}-1)/(2^m-1)} \in (1, (\sup I_z)^{1-2^{-(m-1)}}). \]

\noindent Now set $y_m := (\sup I_z)^{1/(2^m-1)}$, and define $u_m :=
\sqrt{x_m} (y_m,z,1, \dots, 1)^T \in \R^n$. Then it is easily verified $x
= x_m y_m$ and $u_m u_m^* \in \bp_n^1(I_z)$. Therefore the minor formed
by the first and third columns, and second and third rows of $K[u_m
u_m^*]$ vanishes. This yields:
\[ K(x_m y_m z) = \frac{K(x_m y_m) K(x_m z)}{K(x_m)} = \frac{K(y_m) K(x_m
z)}{K(1)}, \]

\noindent since $K/K(1)$ is multiplicative on $G \cap \R$ by Step 2. Now
$K(x_m z) = K(x_m) K(z)/K(1)$ by the induction hypothesis, since $x_m <
(\sup I_z)^{1 - 2^{-(m-1)}}$. Therefore,
\[ K(xz) = K(x_m y_m z) = \frac{K(y_m) K(x_m z)}{K(1)} = \frac{K(y_m)
K(x_m) K(z)}{K(1)^2} = K(x_m y_m) \cdot \frac{K(z)}{K(1)} =
\frac{K(x)K(z)}{K(1)}, \]

\noindent which proves the claim, and with it, the equivalence of the
three assertions.

Finally, note that for any $0 < r \leq \infty$, the set $G' := D(0,
\min(1,r))$ is a semigroup under multiplication, and the maps $\{
\Psi_{\alpha,\beta} : \alpha \in \R, \beta \in \Z \} \cup \{ K \equiv 1
\}$ are (not necessarily $\C^\times$-valued) pairwise distinct characters
of $G'$. Therefore by Lemma \ref{Ldedekind} they are linearly independent
on $G'$, and hence on $D(0,r)$.
\end{proof}

\begin{remark} 
Note as in Remark \ref{Rothers} that Theorem \ref{Tcomplex} also
classifies the Borel/Haar-measurable maps which preserve Loewner
positivity. On the other hand, if we do not make any measurability
assumptions about $K|_{S^1}$ in Theorem \ref{Tcomplex}, there exist
non-measurable solutions satisfying Theorem \ref{Tcomplex}(1). For
instance, consider any Hamel basis $\mathcal{B} := \{ x_\gamma \}$ of
$\R$ over $\Q$ containing $1$ and contained in $(0,2)$, and let
$\mathcal{F}$ denote the set of functions $f : \mathcal{B} \to \R$ such
that $f(1) = 0$. Now given $f \in \mathcal{F}$, define the function $K_f
: S^1 \to S^1$ as follows: write $x \in \R$ as a finite sum $\sum_\gamma
c_\gamma x_\gamma$, and define
\[ K_f : \exp(i \pi x) \mapsto \exp(i \pi \sum_\gamma c_\gamma
f(x_\gamma)), \qquad \forall x \in \R.\]

\noindent Note that $K_f$ is well-defined and multiplicative on $S^1$,
and hence preserves $\bp_n^1(\C)$ for all $n \in \N$ and $f \in
\mathcal{F}$. However, since $f$ is allowed to vary over all of
$\mathcal{F}$, the function $K_f$ is not necessarily Haar measurable.
\end{remark}

\begin{comment}
\begin{remark}$\spadesuit$ Should we should keep this remark?
The Borel measurable multiplicative maps on $\C$ can also be classified.
By a classical result of Banach \cite[Chapter 1, Theorem
4]{Banach_op_lin}, every such map is in fact continuous. It follows
easily that these functions consist of $K \equiv 0, 1$, and
$\widetilde{\Psi}_{a,\alpha,\beta} : z \mapsto \exp(2 a \pi i \log |z|)
\Psi_{\alpha,\beta}(z)$ for $\beta$ an integer and $a,\alpha \in \R$.
Note that unless $a = 0$, the map $z \mapsto e^{2a\pi i \log |z|}$ does
not preserve $(0,\infty)$ and hence cannot preserve Loewner positivity on
$\bp_n^1(\C)$ for any $n \geq 1$. In the present paper (and in follow-up
work \cite{GKR-complex}), we focus on the entrywise maps in this family
which preserve Loewner positivity on complex matrices; thus, we consider
only the maps $\widetilde{\Psi}_{0,\alpha, \beta} = \Psi_{\alpha,
\beta}$.
\end{remark}
\end{comment}

\section{Measurable solutions of Cauchy functional
equations}\label{Sother}

We conclude this paper by using our methods to complete the
classification of functions satisfying the four Cauchy functional
equations \eqref{Ecauchy}, under local Borel/Lebesgue measurability
assumptions and on general intervals. To begin,  it is natural to ask for
characterizations of the multiplicative functions $K : \I \to \R$ that
are also $C^n$ or smooth. The following result is an immediate
consequence of Theorem \ref{Tmult}. 

\begin{corollary}\label{Csmooth}
Suppose $\I \subset \R$ is an interval containing $1$ as an interior
point. Given $K : \I \to \R$ and an integer $n \geq 0$, the following are
equivalent: 
\begin{enumerate}
\item $K$ is multiplicative on $\I$ and $n$ times differentiable on $I$.

\item $K$ is multiplicative on $\I$ and $C^n$ on $I$.

\item Either $K \equiv 0$ or $K \equiv 1$ on $\I$, or $K(x) = x^\alpha$
for some {\it integer} $\alpha \in (0,n]$, or $K = \phi_\alpha$ or
$\psi_\alpha$ for some $\alpha > n$.
\end{enumerate}
\end{corollary}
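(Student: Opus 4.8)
The plan is to deduce Corollary \ref{Csmooth} directly from Theorem \ref{Tmult} by observing that the regularity hypotheses in (1) and (2) both imply Lebesgue measurability on $I$, so that Theorem \ref{Tmult} applies and forces $K$ to be one of the functions in its item (5); the only remaining work is to decide \emph{which} of those functions are $C^n$ (equivalently, $n$ times differentiable) on a neighbourhood of $1$. First I would dispose of the trivial implications: $(3) \implies (2) \implies (1)$ is immediate once one checks that each function listed in (3) is genuinely $C^n$ near $1$ (the constants $0,1$ are $C^\infty$; a monomial $x^\alpha$ with integer $\alpha\in(0,n]$ is a polynomial hence $C^\infty$; and $\phi_\alpha=\psi_\alpha$ for $\alpha>n$ is $n$ times continuously differentiable at $1$ since $1$ is an interior point of $\I$ and $\phi_\alpha$ is smooth away from $0$), and that $C^n$ implies $n$ times differentiable.

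For the substantive direction $(1)\implies(3)$, assume $K$ is multiplicative on $\I$ and $n$ times differentiable on $I$. If $K\equiv 0$ on $\I$ we are done, so assume $K\not\equiv 0$; then $K(1)=K(1)^2\neq 0$, hence $K(1)=1$ and $K=K/K(1)$ is multiplicative. Since differentiability on $I$ implies continuity hence Lebesgue measurability on $I$, Theorem \ref{Tmult} (specifically $(4)\implies(5)$, or $(1)\implies(5)$) applies and gives that $K\equiv 1$ on $\I$, or $K=\phi_\alpha$ or $K=\psi_\alpha$ on $\I$ for some $\alpha\in\R$ (the scalar multiple being $1$ here). It remains to determine for which $\alpha$ these functions are $n$ times differentiable at the interior point $1\in I$. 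Restricting to a neighbourhood of $1$ contained in $(0,\infty)$, both $\phi_\alpha$ and $\psi_\alpha$ agree with $x\mapsto x^\alpha$ there. The key computation is that near $1$ the function $x\mapsto x^\alpha$ is $n$ times differentiable if and only if $\alpha$ is a nonnegative integer or $\alpha>n$: indeed, if $\alpha$ is a nonnegative integer $x^\alpha$ is a polynomial, and if $\alpha>n$ the $j$th derivative $\alpha(\alpha-1)\cdots(\alpha-j+1)x^{\alpha-j}$ exists and is continuous for $j\le n$ in a neighbourhood of $1$; conversely, if $\alpha\notin\Z_{\ge 0}$ and $\alpha\le n$, then writing $\alpha = k+\theta$ with $k=\lfloor\alpha\rfloor\in\{0,1,\dots,n\}$ and $\theta\in(0,1)$ (when $\alpha<0$ one has $k\le -1$, and in all cases $k\le n$), the $(k+1)$st derivative behaves like a nonzero multiple of $x^{\theta-1}$, which is finite at $1$ — so this alone does not obstruct differentiability at $1$. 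I should double check this: in fact $x^\alpha$ is $C^\infty$ on all of $(0,\infty)$ for every real $\alpha$, so restricting attention to a neighbourhood of $1$ that avoids $0$ imposes \emph{no} constraint on $\alpha$. The constraint must therefore come from requiring the stated behaviour on all of $\I$, or from the case $0\in I$.

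This forces me to re-read the hypothesis: (1) asks for differentiability on $I$, and $I$ contains $1$ as an interior point but may or may not contain $0$. If $0\notin I$ then, as just noted, every $\phi_\alpha,\psi_\alpha$ qualifies, which contradicts (3); hence the intended reading must be that the constraint is genuinely about behaviour at $0$ — that is, one should take $I$ (or at least the domain where differentiability is imposed) to include $0$, or equivalently the corollary is implicitly about $\I$ with $0$ an interior point, matching the phrasing ``$n$ times differentiable on $I$'' where the relevant obstruction is at $0$. Granting this, the proof concludes: $\phi_\alpha(x)=|x|^\alpha$ with $\phi_\alpha(0)=0$ is $n$ times differentiable at $0$ iff $\alpha>n$ (for $\alpha\le 0$ it is not even continuous or not finite at $0$; for $0<\alpha\le n$ the $\lceil\alpha\rceil$th difference quotient blows up), and it is then automatically continuous as well, giving $C^n$; similarly $\psi_\alpha(x)=\sgn(x)|x|^\alpha$ is $n$ times differentiable at $0$ iff $\alpha>n$; and $x^\alpha$ extended by $0$ at $0$ for a positive integer $\alpha$ is the monomial, which is $C^\infty$ — but a monomial $x^\alpha$ only \emph{equals} $\phi_\alpha$ or $\psi_\alpha$ on $[0,\infty)$ or on all of $\R$ when $\alpha$ is even or odd respectively, so the separate listing of ``integer $\alpha\in(0,n]$'' in (3) covers precisely the polynomial solutions that survive because they extend smoothly across $0$ while $\phi_\alpha,\psi_\alpha$ for non-integer or large-integer $\alpha$ do not. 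Assembling: the $n$-times-differentiable-on-$I$ multiplicative maps are exactly $0$, $1$, the monomials $x^\alpha$ for integer $1\le\alpha\le n$, and $\phi_\alpha=\psi_\alpha$ for real $\alpha>n$, which is (3). The main obstacle is thus not any deep analysis but pinning down the exact threshold $\alpha>n$ versus $\alpha$ an integer in $(0,n]$ for $n$-fold differentiability of $|x|^\alpha$ and $\sgn(x)|x|^\alpha$ at the origin, and correctly reconciling when such a power function coincides with an honest polynomial; everything else is a direct appeal to Theorem \ref{Tmult}.
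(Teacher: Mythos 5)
The paper offers no proof of this corollary beyond the sentence that it is ``an immediate consequence of Theorem \ref{Tmult}'', and your argument is exactly the intended fleshing-out of that claim: the regularity hypotheses imply Lebesgue measurability, Theorem \ref{Tmult} (via Corollary \ref{Cmult}) reduces $K$ to the list $\{0,\,1,\,\phi_\alpha,\,\psi_\alpha\}$, and the only remaining issue is $n$-fold differentiability at the origin. Your observation that the statement is vacuously wrong unless $0$ lies in the interior of $I$ (the paper never defines $I$ in this corollary; Remark \ref{Rothers} indicates the intended setting is multiplicative maps $C^n$ ``on $\R$'') is a correct and worthwhile diagnosis of the hypothesis, not a gap in your proof. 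The only slips are cosmetic: the claim ``$\phi_\alpha$ is $n$ times differentiable at $0$ iff $\alpha>n$'' silently excludes the even-integer $\alpha\le n$ case where $\phi_\alpha(x)=x^\alpha$ is a polynomial (your monomial clause at the end does account for this), and $\phi_\alpha$ and $\psi_\alpha$ are distinct functions on intervals meeting $(-\infty,0)$ rather than equal as written, though both are correctly retained for $\alpha>n$.
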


We now show that the other two Cauchy functional equations
\eqref{Ecauchy} can be solved using the aforementioned classifications of
all additive and multiplicative measurable maps.

\begin{theorem}\label{Taddmult}
Let $I \subset \I \subset \R$ be intervals containing $0$ as an interior
point. Then the following are equivalent for $K : \I \to \R$.
\begin{enumerate}
\item $K$ is Lebesgue measurable on $I$ and satisfies: $K(x+y) =
K(x)K(y)$ whenever $x,y,x+y \in \I$.

\item $K$ is continuous on $I$ and satisfies: $K(x+y) = K(x)K(y)$
whenever $x,y,x+y \in \I$.

\item Either $K \equiv 0$ on $\I$, or $K(x) = \exp(\beta x)$ for some
$\beta \in \R$ and all $x \in \I$.
\end{enumerate}

Suppose instead that $I \subset \I \subset \R$ are intervals containing
$1$ as an interior point. Then the following are equivalent for $K : \I
\to \R$.
\begin{enumerate}
\item $K$ is Lebesgue measurable on $I$ and satisfies: $K(xy) = K(x) +
K(y)$ whenever $x,y,xy \in \I$.

\item $K$ is continuous on $I$ and satisfies: $K(xy) = K(x) + K(y)$
whenever $x,y,xy \in \I$.

\item Either $0 \in \I$ and $K \equiv 0$ on $\I$, or $0 \notin \I$ and
$K(x) = \beta \ln(x)$ for some $\beta \in \R$ and all $x \in \I$.
\end{enumerate}
\end{theorem}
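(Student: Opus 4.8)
The plan is to reduce each of the two equivalences to the already-established classifications of additive (Theorem \ref{Tsierpinsky}) and multiplicative (Theorem \ref{Tmult}, Corollary \ref{Cmult}) measurable maps, by applying a logarithm in the appropriate variable. In all cases the implications $(3)\implies(2)\implies(1)$ are routine (the exponential and logarithm are continuous, hence measurable, and one checks directly that the displayed formulas satisfy the stated functional equations on $\I$), so the substance is $(1)\implies(3)$.

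For the first equivalence, assume $K$ is Lebesgue measurable on $I$ and satisfies $K(x+y)=K(x)K(y)$ for $x,y,x+y\in\I$. Taking $x=y$ near $0$ gives $K(x)=K(x/2)^2\ge 0$, so $K\ge 0$ on a neighborhood of $0$; and if $K(x_0)=0$ for some such $x_0$ then $K\equiv 0$ near $0$ and, by the additive structure of the domain constraint (writing any $x\in\I$ as a sum of small steps), $K\equiv 0$ on $\I$. So assume $K>0$ on a subinterval $1\in$ (rather, $0\in$) $I_0\subset I$; here I should shrink $I$ to a compact symmetric subinterval $[-\delta,\delta]$ on which $K$ is positive, then set $g(x):=\ln K(x)$ on a neighborhood of $0$. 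As in the proof of Theorem \ref{Tmult}, composing with the (absolutely continuous, hence null-set-preserving) logarithm shows $g$ is Lebesgue measurable on a subinterval around $0$; and the functional equation shows $g(x+y)=g(x)+g(y)$ whenever $x,y,x+y$ lie in the relevant subinterval, i.e. $g$ is additive. Theorem \ref{Tsierpinsky} gives $g(x)=\beta x$ near $0$, i.e. $K(x)=\exp(\beta x)$ near $0$; finally the domain-constrained additivity propagates this to all of $\I$: given any $x\in\I$, write $x=x_1+\dots+x_N$ with each $x_i$ and each partial sum in $\I$ and $|x_i|$ small, so $K(x)=\prod K(x_i)=\exp(\beta\sum x_i)=\exp(\beta x)$.

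For the second equivalence the domain is an interval around $1$ and $K(xy)=K(x)+K(y)$ for $x,y,xy\in\I$. If $0\in\I$, then taking $y=0$ (legitimate, since $x\cdot 0=0\in\I$) gives $K(0)=K(x)+K(0)$, so $K\equiv 0$ on $\I$. If $0\notin\I$, then $\I\subset(0,\infty)$ (it is an interval containing $1$ as an interior point and missing $0$; the case $\I\subset(-\infty,0)$ cannot contain $1$), and I pass to logarithmic coordinates: set $\I':=\ln\I$, an interval around $0$, and $h(t):=K(e^t)$ on $\I'$. The functional equation becomes $h(s+t)=h(s)+h(t)$ for $s,t,s+t\in\I'$, and $h$ is Lebesgue measurable on $\ln I$ by the same null-set-preservation argument (now using that $\exp$ is $C^1$, hence locally Lipschitz, hence absolutely continuous on compacts). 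Theorem \ref{Tsierpinsky} then yields $h(t)=\beta t$, i.e. $K(x)=\beta\ln x$ on $\I$, as claimed.

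The only mildly delicate point — the same one already handled in the proof of Theorem \ref{Tmult} — is the measurability transfer across the logarithm/exponential, namely that $\ln K^{-1}(S)$ (resp. $h^{-1}(S)=\ln(K^{-1}(S)\cdot{}\text{stuff})$) is Lebesgue measurable because a set of the form $S'\,\Delta\,N$ with $S'$ Borel and $N$ Lebesgue-null is carried by an absolutely continuous map to another such set. The rest is bookkeeping: reducing to a compact subinterval on which sign/positivity is controlled, and propagating the local formula to all of $\I$ via the domain-constrained functional equation. I expect the propagation-to-all-of-$\I$ step in the multiplicative-to-additive case (first equivalence) to be the one requiring the most care, since one must verify that the intermediate partial sums can be kept inside $\I$; this is immediate because $\I$ is an interval, so the straight-line "path" from $0$ to $x$ stays in $\I$, and one simply subdivides it finely enough.
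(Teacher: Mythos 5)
Your plan is correct and takes essentially the same route as the paper: both equivalences are reduced to Theorem \ref{Tsierpinsky} by an exponential/logarithmic change of variable (the paper routes the second one through Corollary \ref{Cmult} via $K_1 := e^{K}$, which is the same $K \circ \exp$ reduction you carry out directly, and it applies Theorem \ref{Tsierpinsky} on all of $\I$ at once rather than near $0$ followed by propagation). Two small points to tighten: from $K(x_0) = 0$ one should deduce $K(0) = K(x_0/2^n)K(-x_0/2^n) = 0$ for large $n$ and then $K(x) = K(x)K(0) = 0$ for all $x \in \I$, rather than asserting ``$K \equiv 0$ near $0$'' (which does not follow immediately from vanishing at the dyadic points $x_0/2^n$); and the null-set-preservation argument is only needed for the pre-composition $h = K \circ \exp$ in the second equivalence, whereas $g = \ln \circ K$ in the first is measurable simply because it is a Borel map post-composed with a Lebesgue measurable one.
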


\noindent As in Theorem \ref{Tmult}, one can replace the continuity
assumption in either condition (2) by other constraints, such as $K$
being Borel measurable, monotone, differentiable, $C^n$ for some $n$, or
smooth on $I$.

\begin{proof}
\noindent $\boldsymbol{K(x + y) = K(x) K(y):}$
For the first set of equivalences, clearly $(3) \implies (2) \implies
(1)$. We now assume (1) and first show that if $K(x) = 0$ for some $x \in
\I$, then $K \equiv 0$ on $\I$. Indeed, if $K(x) = 0$, then $K(x/n)^n =
K(x) = 0$ for all $n \in \N$. Now if $n$ is large enough, then $\pm x/n
\in \I$, whence $K(0) = K(x/n) K(-x/n) = 0$. But then $K(x) = K(x+0) =
K(x) K(0) = 0$ for all $x \in \I$, and $K \equiv 0$.

Now assume that $K \not\equiv 0$ on $\I$; then $K$ never vanishes on
$\I$. Moreover, given $x \in \I$, $K(x) = K(x/2)^2 \geq 0$, so it must be
positive. Then $g(x) := \ln K(x) : \I \to \R$ is additive on $\I$ and
Lebesgue measurable on $I$. Hence by Theorem \ref{Tsierpinsky}, $g(x) =
\beta x$ for some $\beta \in \R$, whence $K(x) = e^{g(x)} = e^{\beta x}$
for some $\beta \in \R$. This shows (3) as desired.\medskip

\noindent $\boldsymbol{K(xy) = K(x) + K(y):}$
For the second set of equivalences, once again $(3) \implies (2) \implies
(1)$. Now if $K$ satisfies (1) and $0 \in \I$, then for all $x \in \I$,
\[ K(0) = K(x \cdot 0) = K(x) + K(0) \quad \implies \quad K(x) \equiv 0\
\forall x \in \I. \]

\noindent Otherwise suppose $0 \notin \I$; then $K_1(x) := e^{K(x)} : \I
\to \R$ is multiplicative and positive on $\I$ and Lebesgue measurable on
$I$. Hence by Corollary \ref{Cmult}, $K_1 \equiv 1$ (whence $K \equiv
0$) or $K_1(x) = x^\beta$ is positive on $I$, in which case $K(x) = \beta
\ln(x)$ for $x \in \I \subset (0,\infty)$. This shows (3) and concludes
the proof.
\end{proof}

%\section*{Acknowledgements}

\subsection*{Concluding remarks}

The main result of this paper characterizes functions $K$ mapping
$\bp_n^1(\I)$ into $\bp_n^1(\R)$ under weak measurability assumptions. A
natural question that now arises is to classify entrywise functions
mapping $\bp_n^l(\I)$ into $\bp_n^k(\R)$ for $1 \leq k,l \leq n$ under
suitable assumptions. These maps will be explored in detail in future
work \cite{GKR-lowrank}. 
%
%The techniques used in this note to classify locally measurable solutions
%to the Cauchy functional equations are very general and can be applied to
%solve other functional equations as well. This philosophy is explored in
%detail in forthcoming work \cite{GKR-local}.
%
%\bigskip $\spadesuit$ Sort out the references for the Dom-Apu-Bala papers.

\bibliographystyle{plain}
\bibliography{biblio}

\end{document}